\documentclass{article}
\usepackage[utf8]{inputenc}
\usepackage[T1]{fontenc}
\usepackage[top=3cm, bottom=3cm, left=3cm, right=3cm, marginpar=3cm]{geometry}
\usepackage{amsfonts,amsmath,amsthm, amssymb,hyperref,enumitem,todonotes}
\newtheorem{theorem}{Theorem}
\newtheorem{claim}[theorem]{Claim}

\theoremstyle{definition}
\newtheorem{definition}[theorem]{Definition}

\let\epsilon\varepsilon

\title{Graphs without a rainbow path of length 3\thanks{This work was supported by the National Science Centre grant 2021/42/E/ST1/00193. An extended abstract announcing the results presented in this paper has been published in the Proceedings of Eurocomb’23.}}

\author{Sebastian Babiński\thanks{Faculty of Mathematics and Computer Science, Jagiellonian University, {\L}ojasie\-wicza~6, 30-348 Krak\'{o}w, Poland. E-mail: {\tt Sebastian.Babinski@alumni.uj.edu.pl}.}\and
Andrzej Grzesik\thanks{Faculty of Mathematics and Computer Science, Jagiellonian University, {\L}ojasie\-wicza~6, 30-348 Krak\'{o}w, Poland. E-mail: {\tt Andrzej.Grzesik@uj.edu.pl}.}}

\date{}

\begin{document}
\maketitle

\begin{abstract}
In 1959 Erd\H os and Gallai proved the asymptotically optimal bound for the maximum number of edges in graphs not containing a path of a fixed length. Here we study a rainbow version of their theorem, in which one considers $k \geq 1$ graphs on a common set of vertices not creating a path having edges from different graphs and asks for the maximum number of edges in each graph. We prove the asymptotically optimal bound in the case of a path on three edges and any $k \geq 1$. 
\end{abstract}

\section{Introduction}\label{sec:intro}

A classical problem in graph theory is to determine the Tur\'an number of a graph $F$, i.e., the maximum possible number of edges in graphs not containing a particular forbidden structure $F$ as a subgraph. The notable results are exact solutions for triangle by Mantel \cite{Man07} and for complete graph by Tur\'an~\cite{Tur41}, and an asymptotically optimal bound for any non-bipartite graph by Erd\H os and Stone \cite{ES46}. Not much is known for bipartite graphs, but in the case of a path it was solved asymptotically by Erd\H os and Gallai \cite{EG59} in 1951, while in 1975 Faudree and Schelp \cite{FS75} provided an exact solution.

There are many possible ways to define a rainbow version of the problem. For instance, Keevash, Mubayi, Sudakov and Verstra\"ete \cite{KMSV07}  proved that if we additionally require that the coloring is a proper edge coloring and maximize the total number of colored edges avoiding a rainbow copy of $F$, then the answer for non-bipartite graph $F$ is asymptotically the same as the Tur\'an number of $F$. Later, results for some bipartite graphs in such setting appeared in particular in \cite{DLS13, EGM19, Hal22, JPS17, JR20}, as well as results regarding maximizing  subgraphs other than edges were proven in \cite{BDHL20, GMMC22, HP21, Jan20, Jan22}.

Here we concentrate on a rainbow version without the additional assumption on proper coloring and when the number of edges in each color is maximized. Formally speaking, for a graph $F$ and an integer $k$ we consider $k$ graphs $G_1, G_2, \ldots, G_k$ on the same set of vertices and ask for the maximum possible number of edges in each graph avoiding the appearance of a copy of $F$ having every edge from a different graph. In other words, for every $i$ we color edges of $G_i$ in color $i$ (in particular it means that an edge can be in many colors) and forbid all copies of $F$ having non-repeated colors, so called rainbow copies. Note that if all $G_i$ are exactly the same, then the existence of a rainbow copy of $F$ is equivalent to the existence of a non-colored copy of~$F$, therefore any bound in the rainbow version is also bounding the Tur\'an number of $F$. 

Recently, Aharoni, DeVos, de la Maza, Montejano and \v S\'amal \cite{ADMMS20} and independently Culver, Lidick\'y, Pfender
and Volec, answering a question of Diwan and Mubayi \cite{DM07} partially solved by Magnant \cite{Mag15}, proved that for $3$ colors and $F$ being a triangle the asymptotically optimal bound is surprisingly $\left(\frac{26-2\sqrt{7}}{81}\right)n^2 \approx 0.2557n^2$.
They also asked for similar theorems for bigger cliques, other graphs and different colored patterns (in this setting some results were proven in \cite{DMM19} and \cite{LMT20}). Similar problem, but where one maximizes the sum or product of the number of edges (instead of the number of edges in each color), was considered in particular in  \cite{FGHLSTVZ22} and \cite{KSSV04}. 

Here we prove the asymptotically tight bound in the case of a path with 3 edges and any number of colors.  

\begin{theorem}\label{thm:main_asymptotic}
For every $\epsilon > 0$ there exists $n_0 \in \mathbb{N}$ such that for every $n \geq n_0$, $k \geq 1$ and graphs $G_1, G_2, \ldots, G_k$ on a common set of $n$ vertices, each graph having at least $(f(k) + \epsilon)\frac{n^2}{2}$ edges, where
\[ f(k) = \left\{
\begin{array}{ll}
\lceil \frac{k}{2} \rceil^{-2} & \text{for } k \leq 6,\\
\frac{1}{2k - 1} & \text{for } k \geq 7,
\end{array} \right.\]
there exists a rainbow path with 3 edges.
Moreover, the above bound on the number of edges is asymptotically optimal for each $k \geq 1$.
\end{theorem}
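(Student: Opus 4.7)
For $k \leq 6$, set $t = \lceil k/2 \rceil$, partition $V$ into $t$ equal parts $V_1, \ldots, V_t$, and choose a map $\pi : [k] \to [t]$ in which each part is hit at most twice (possible since $k \leq 2t$). Let $G_i$ be the clique on $V_{\pi(i)}$. Then $|E(G_i)| = \binom{n/t}{2} \sim f(k)\,n^2/2$. Any $P_4$ is connected and its edges all lie inside parts, so the three edges sit in a single $V_j$; since that part supports at most two colors, no $P_4$ can be rainbow. For $k \geq 7$ this construction gives only $1/\lceil k/2 \rceil^2 < 1/(2k-1)$, so a different extremal example is needed; I would look for one matching the target density $n^2/(2(2k-1))$, presumably blending cliques on small sets with carefully placed bipartite pieces so that any $3$-edge path still uses at most two colors.

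\textbf{Upper bound.} Suppose for contradiction each $G_i$ has at least $(f(k)+\epsilon)\,n^2/2$ edges and no rainbow $P_4$ exists. For each edge $e$ of the union $H = G_1 \cup \cdots \cup G_k$, set $C(e) = \{i : e \in G_i\}$. The hypothesis says that for every $P_4$ $v_0 v_1 v_2 v_3$ in $H$, the triple $\bigl(C(v_0 v_1), C(v_1 v_2), C(v_2 v_3)\bigr)$ admits no system of distinct representatives. By Hall's theorem either two of these color sets lie jointly inside a single common color, or the union of all three sets has size at most $2$. The plan is to classify vertices by the multiset of colors appearing on their incident edges and to prove iteratively that the vertex set splits into a small number of ``color-classes''; each color is then essentially supported inside one or two of these classes, mirroring one of the extremal configurations. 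At the start, the classical Erd\H os--Gallai bound applied to the (dense) union $H$ will provide the abundance of $P_4$'s needed to exploit these local constraints.

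\textbf{Main obstacle.} The hardest part is stability: upgrading the pointwise Hall-type constraints into a global edge bound that matches $f(k)\,n^2/2$ per color. Two features make this delicate. First, $f(k)$ changes form at $k = 7$, so the argument must identify the correct extremal structure in each regime and rule out hybrid near-extremal configurations that could beat one benchmark without exceeding the other. Second, the Hall dichotomy leaves several qualitatively different local pictures (two adjacent edges sharing a unique color, versus at most two colors on the whole $P_4$), and gluing these across the graph requires care. I would expect to prove first a rough structural statement (a near-partition of $V$ into a bounded number of classes), then a cleanup step pinning down the precise number of classes and colors per class so that the edge count exactly reproduces $f(k)$; the small cases $k = 1, 2$ where the stated bound is vacuous would need a brief separate remark.
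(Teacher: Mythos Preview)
Your proposal is a plan, not a proof, and both halves have genuine gaps.

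\textbf{Lower bound for $k\geq 7$.} You explicitly leave this open. The construction you are looking for is not a blend of cliques and bipartite pieces in the way you suggest; it is a ``star'' configuration. Take disjoint sets $A_1,\ldots,A_k,X$ with $|A_i|=n/(2k-1)$ and $|X|=(k-1)n/(2k-1)$, and let $G_i$ consist of the clique on $A_i$ together with the complete bipartite graph between $A_i$ and $X$. Then $|E(G_i)|\sim \frac{1}{2k-1}\cdot\frac{n^2}{2}$. Any path of length $3$ in $\bigcup G_i$ must alternate between $X$ and the sets $A_i$, or stay inside a single $A_i$ for two consecutive edges; in either case the three edges carry at most two distinct colors. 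Without this example the optimality claim for $k\geq 7$ is unsupported, and the very existence of a second extremal regime is what makes the upper bound nontrivial.

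\textbf{Upper bound.} The Hall observation is correct but is only a local constraint on individual $P_4$'s; you give no mechanism for turning it into a global partition of $V$, and the ``rough structural statement then cleanup'' you describe is exactly the hard content of the theorem. In particular, nothing in your outline distinguishes the clique regime $k\leq 6$ from the star regime $k\geq 7$, nor explains why the threshold is at $7$. The paper's route is quite different from yours: it first applies a colored removal lemma to pass from ``no rainbow $P_4$'' to ``no rainbow $3$-edge walk'', then observes that in a walk-free configuration one can saturate and cluster vertices by their incident color sets, reducing the problem to a finite-dimensional optimization over weighted ``clustered graphs'' with variables $a_i,b_{ij},x$. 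The remainder is a variational analysis (shifting infinitesimal weight between clusters and reading off sign conditions from extremality), carried out separately for $k\in\{3,4\}$, $k\in\{5,6\}$, and $k\geq 7$. Your Hall dichotomy does not obviously produce this reduction, and absent such a reduction you have no way to quantify the competing configurations or to see the crossover at $k=7$.
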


In order to avoid struggling with the lower-order error terms and to obtain a structure easier to handle, we rewrite Theorem~\ref{thm:main_asymptotic} to a bit different setting.

Assuming that Theorem~\ref{thm:main_asymptotic} does not hold we obtain an arbitrarily large counterexample with at least $(f(k) + \epsilon)\frac{n^2}{2}$ edges in each color and without a rainbow path with 3 edges. Using colored graph removal lemma implied by the Szemer\'edi Regularity Lemma (see \cite{Fox11} or Appendix C in \cite{IKLS23}), we remove all rainbow walks with 3 edges by removing at most $\frac{1}{4}\epsilon n^2$ edges in each color. Then, we add all possible edges without creating rainbow walks with 3 edges. Finally, we group all the vertices into clusters based on the colors on the incident edges. Note that if there is an edge between two clusters (or inside one), then all the vertices between these clusters (or inside this cluster) can be connected by edges in the same color without creating a rainbow walk of length 3. Thus, from the maximality, between clusters (and inside them) in each color we have none or all possible edges. Additionally, notice that vertices in a cluster incident to only one or two colors can be all connected by edges in those colors, while vertices incident to more than 2 colors need to form an independent set. 
Therefore, in order to prove Theorem~\ref{thm:main_asymptotic}, it is enough to prove its equivalent version for such kind of clustered graphs. 

\begin{definition}
For any integer $k \geq 1$ a \textit{clustered graph for $k$ colors} is an edge-colored weighted graph on $\binom{k}{2}+k+1$ vertices with vertex weights $b_{ij} = b_{ji}$ for $1\leq i < j \leq k$, $a_i$ for $i \in [k]$ and $x$, in which 
\begin{itemize}
\setlength{\parskip}{0pt}
\item $x\geq0$, $a_i\geq0$ for $i \in [k]$ and $b_{ij} \geq 0$ for every $1\leq i < j \leq k$,
\item $\displaystyle x + \sum_{1\leq i \leq k} a_i + \sum_{1\leq i < j \leq k} b_{ij} = 1$,
\item for every $i \in [k]$ the vertex of weight $a_i$ is connected in color $i$ with itself, the vertex of weight $x$ and all the vertices of weights $b_{ij}$ for $j \neq i$,
\item for every $1\leq i < j \leq k$ each vertex of weight $b_{ij}$ is connected in colors $i$ and $j$ with itself,
\item there are no other edges.
\end{itemize}
\end{definition}

We define the \textit{density of edges} in color $i \in [k]$ in a clustered graph $G$ as the sum over all pairs of vertices connected by an edge in color $i$ of the product of their weights. Denoting it by $d_i(G)$ we obtain 
\[d_i(G) = a_i^2  + 2a_ix + \sum_{j \in [k] \setminus \{i\}} b_{ij}^2 + 2\sum_{j \in [k] \setminus \{i\}} a_i b_{ij}.\]

A clustered graph for $k=3$ colors is depicted in Figure~\ref{fig:clustergraph}.
Intuitively, for every $i,j\in [k]$ the vertex of weight $b_{ij}$ represents $b_{ij}n$ vertices incident to edges colored $i$ and $j$, the vertex of weight~$a_i$ represents $a_i n$ vertices incident only to edges colored $i$, and the vertex of weight $x$ represents the remaining vertices forming an independent set. 

\begin{figure}[ht]
    \centering
    \includegraphics[scale=0.88]{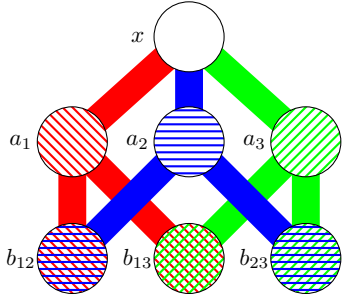}
    \caption{A clustered graph for 3 colors.}
    \label{fig:clustergraph}
\end{figure}

The equivalent version of Theorem~\ref{thm:main_asymptotic} for clustered graphs is the following.

\begin{theorem}\label{thm:main_weight}
For every integer $k \geq 1$, if $G$ is a clustered graph for $k$ colors, then
\[ \min_{i \in [k]} d_i(G) \leq f(k),
\text{ where }
 f(k) = \left\{
\begin{array}{ll}
\lceil \frac{k}{2} \rceil^{-2} & \text{for } k \leq 6,\\
\frac{1}{2k - 1} & \text{for } k \geq 7.
\end{array} \right.\]
\end{theorem}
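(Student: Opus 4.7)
The plan is to argue by contradiction, supposing $d_i(G) > f(k)$ for every color $i \in [k]$, and to exploit the identity
\[
f(k) = \max\bigl(\lceil k/2 \rceil^{-2},\; (2k-1)^{-1}\bigr),
\]
which reflects two qualitatively distinct extremal families: the \emph{matching extremal}, in which $a_i = x = 0$ and the $b_{ij}$'s are concentrated on $\lceil k/2 \rceil$ pairs of colors with equal weight $1/\lceil k/2\rceil$; and the \emph{star extremal}, in which all $b_{ij} = 0$, $a_i = 1/(2k-1)$, and $x = (k-1)/(2k-1)$. The basic algebraic identity I rely on is
\[
d_i = (a_i + B_i)^2 + 2 a_i x - 2 Z_i, \qquad B_i = \sum_{j \neq i} b_{ij}, \qquad Z_i = \sum_{\{j,l\} \subseteq [k]\setminus\{i\}} b_{ij} b_{il} \geq 0,
\]
so that $d_i \leq (a_i + B_i)^2 + 2 a_i x$, with equality precisely when at most one $b_{ij}$ is positive.

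The first step is to handle the two reduced regimes directly. For the star regime ($b_{ij} = 0$ for all $i,j$), $d_i = a_i(a_i + 2x)$, and the constraint $\sum_i a_i + x = 1$ combined with $(a_i+x)^2 = d_i + x^2$ yields, upon summing,
\[
1 + (k-1)x \;=\; \sum_i (a_i + x) \;\geq\; k\sqrt{d + x^2}, \qquad d := \min_i d_i,
\]
so that squaring, rearranging, and optimizing the right-hand side in $x$ gives $d \leq 1/(2k-1)$, attained at $x = (k-1)/(2k-1)$. For the matching regime ($a_i = x = 0$ and $\sum_{i<j} b_{ij} = 1$), setting $c_i = \max_{j\neq i} b_{ij}$ yields $d_i = \sum_j b_{ij}^2 \leq c_i B_i$, hence (assuming $d_i \geq d$ for all $i$) $d \sum_i 1/c_i \leq \sum_i B_i = 2$. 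The AM--HM inequality together with $\sum_i c_i \leq 2$ gives $d \leq 4/k^2 = 1/\lceil k/2\rceil^2$ for even $k$; for odd $k$, the refinement that no perfect matching spans the set of endpoints of positive-weight edges forces $\sum c_i$ strictly below $2$ by a margin that delivers the tighter bound $1/\lceil k/2 \rceil^2$.

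To bridge these two regimes for an arbitrary clustered graph, I would use a smoothing argument. The mass-preserving shift $b_{ij} \mapsto b_{ij} - 2\epsilon$, $a_i \mapsto a_i + \epsilon$, $a_j \mapsto a_j + \epsilon$ changes $d_i$ by $2\epsilon(B_i + x - a_i - 2b_{ij}) + O(\epsilon^2)$, changes $d_j$ by the symmetric expression, and leaves every other $d_l$ unchanged; a complementary shift between $a_i$ and $x$ modulates each $d_i$ individually. Iterating these shifts in the appropriate directions drives any optimum toward a configuration satisfying either $b_{ij} = 0$ for all $i,j$ or $a_i = x = 0$ for all $i$, while preserving or increasing $\min_i d_i$; the reduced-regime bounds then yield $\min_i d_i \leq f(k)$ in general.

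The principal obstacle is that \emph{mixed} optima exist: for $k = 5$, the configuration $a_1 = b_{23} = b_{45} = 1/3$ also satisfies $\min_i d_i = 1/9 = f(5)$, so naive monotone smoothing cannot unconditionally reduce to a pure regime. Circumventing this requires allowing smoothing along a curve of equally optimal configurations and a compactness/continuity argument to guarantee termination at a pure extremal. The other delicate step is the combinatorial refinement in the odd-$k$ matching regime, which hinges on bounding the total weight of edges that fail to be doubly-maximal among the positive-weight edges of the spanning substructure.
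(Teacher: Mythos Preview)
Your proposal identifies the right two extremal families and the star-regime computation is correct, but there are two genuine gaps that prevent it from being a proof.

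\textbf{The smoothing reduction does not go through.} You yourself note that for $k=5$ the configuration $a_1 = b_{23} = b_{45} = 1/3$ is optimal, so the flow you set up (splitting $b_{ij}$ into $a_i,a_j$, or trading $a_i$ against $x$) cannot push every optimizer to a pure regime. Invoking ``compactness and continuity along a curve of equally optimal configurations'' is not a repair: the set of optimizers is a closed set in a simplex, but nothing you have said forces any connected component of it to contain a pure-regime point. The paper runs into exactly this phenomenon and deals with it not by smoothing to a pure regime but by a case analysis for each small $k$, backed by an inductive input from $k-2$ (see below).

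\textbf{The matching-regime bound for odd $k$ is not obtained by your method.} From $d \leq c_i B_i$ and $\sum_i B_i = 2$ you get $d\sum_i 1/c_i \leq 2$, and AM--HM gives $\sum_i 1/c_i \geq k^2/\sum_i c_i$. Even granting the refinement $\sum_i c_i \leq 2 - \delta$, this only improves the bound to $d \leq (2-\delta) \cdot 2/k^2$, which for $k=3$ and the extremal $b_{12}=b_{13}=\tfrac12$ (where $\sum_i c_i = 3/2$) yields $d \leq 1/3$, not the required $1/4$. The slack is in the step $d \leq c_i B_i$ summed over $i$, not in AM--HM, so tightening $\sum c_i$ alone cannot close the gap. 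A correct matching-regime argument for odd $k$ needs a different inequality.

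The paper's route is quite different from yours. It proceeds by induction on $k$: removing two colors $i,j$ (and the vertices $a_i,a_j,b_{ij}$) and rescaling gives a clustered graph on $k-2$ colors, and the inductive bound $f(k-2)$ yields
\[
a_i + a_j + b_{ij} < 1 - \sqrt{f(k)/f(k-2)}
\]
for every pair. This single estimate, combined with a handful of local weight-shift inequalities (your first-order shift is one of them), is then fed into a separate analysis for $k\in\{3,4\}$, $k\in\{5,6\}$, and $k\geq 7$, with each range handled by summing the resulting bounds on the $c_i$'s and on $\sum_i a_i$ to contradict $\sum_i(a_i+b_i)+x=1$. There is no global reduction to pure regimes; the mixed optima are absorbed into the case analysis rather than eliminated.
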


Theorem~\ref{thm:main_asymptotic} follows from Theorem~\ref{thm:main_weight}, because, as discussed after the statement of Theorem~\ref{thm:main_asymptotic}, a possible counterexample leads to a graph with density of edges in each color at least $(f(k) + \frac{1}{2}\epsilon)\frac{n^2}{2}$ and clusters of vertices behaving as weighted vertices of a related clustered graph. Dividing each cluster size by $n$ we obtain a clustered graph with density of edges in each color at least $f(k) + \frac{1}{2}\epsilon$, which contradicts Theorem~\ref{thm:main_weight}. 
Note that also Theorem~\ref{thm:main_asymptotic} implies Theorem~\ref{thm:main_weight} as any clustered graph~$G$ contradicting Theorem~\ref{thm:main_weight} having $d_i(G) \geq f(k) + 2\epsilon$ for each $i\in[k]$ and some $\epsilon>0$ leads for any appropriately large $n$ to a graph on $n$ vertices with at least $(f(k) + \epsilon)\frac{n^2}{2}$ edges in each color and no rainbow path with 3 edges (the removed $\epsilon\frac{n^2}{2}$ term was used to compensate lower order terms), which contradicts Theorem~\ref{thm:main_asymptotic}.

The bound provided in Theorem~\ref{thm:main_weight} is tight for every integer $k \geq 1$, because it is possible to construct a clustered graph for $k$ colors $G$ such that $\min_{i \in [k]} d_i(G) = f(k)$:
\begin{itemize}[nosep, label=---]
\setlength{\parskip}{1pt} 
    \item for $k=1$ let $a_1 = 1$;
    \item for $k=2$ let $b_{12} = 1$;    
    \item for $k=3$ let $b_{12} = b_{13} = \frac{1}{2}$;    
    \item for $k=4$ let $b_{12} = b_{34} = \frac{1}{2}$;  
    \item for $k=5$ let $b_{12} = b_{34} = b_{15} = \frac{1}{3}$;  
    \item for $k=6$ let $b_{12} = b_{34} = b_{56} = \frac{1}{3}$;  
    \item for $k = 5$ or $k \geq 7$ let $a_i = \frac{1}{2k - 1}$ for each $i \in [k]$, $x = \frac{k-1}{2k-1}$.
\end{itemize}
In each case the remaining weights are equal to $0$.  

For $k = 5$ there are two different types of constructions because in this case $\lceil \frac{k}{2} \rceil^{-2} = \frac{1}{2k - 1} = \frac{1}{9}$. They are depicted in Figure~\ref{fig:extremal5}. 
Note also that for $k=3$ and $k=5$ these are not the only possible constructions, as instead of having a vertex of weight $b_{1k} = \lceil \frac{k}{2} \rceil^{-1}$, one can also have for any $i\in[k-1]$ two vertices of arbitrary weights $a_k$ and $b_{ik}$ summing up to $\lceil \frac{k}{2} \rceil^{-1}$.

\begin{figure}[ht]
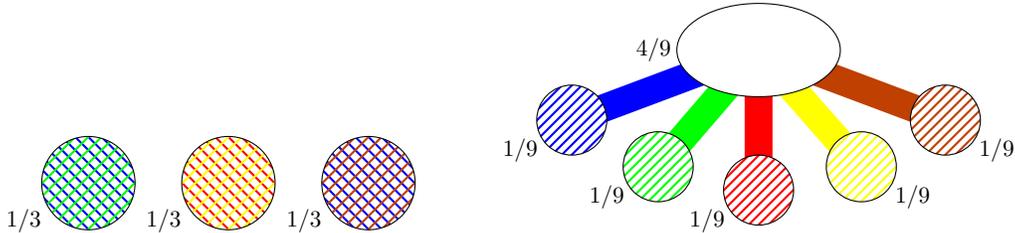

    \centering
    \includegraphics[scale=0.88]{figure-1.mps}\hspace{10mm}
    \includegraphics[scale=0.88]{figure-2.mps}
    \caption{Two possible types of extremal constructions for $k=5$.}
    \label{fig:extremal5}
\end{figure}

Theorem~\ref{thm:main_weight} for $k \in \{1, 2\}$ is trivial as then $f(k) = 1$. 
In order to prove Theorem \ref{thm:main_weight} for $k \geq 3$, due to different extremal constructions, we consider three cases depending on the value of $k$. In Section~\ref{sec:general} we provide a series of claims useful in many of the considered cases. In Section~\ref{sec:34} we prove the case $k \in \{3, 4\}$, in Section~\ref{sec:56} we consider $k \in \{5, 6\}$, while in Section~\ref{sec:7+} we deal with the remaining case~$k \geq 7$.

\section{General claims}\label{sec:general}

We will prove Theorem~\ref{thm:main_weight} by induction. As mentioned in the previous section, for $k \in \{1, 2\}$ the theorem holds. Let us fix $k \geq 3$ as the smallest integer for which the theorem does not hold. Take a clustered graph for $k$ colors $G$ maximizing the value of $\min_{i \in [k]} d_i(G)$ and, among such, maximizing the density of edges in any color.
From the maximality of $G$, there exists no clustered graph for $k$ colors $G'$ having $\min_{i \in [k]} d_i(G) < \min_{i \in [k]} d_i(G')$, or having bigger density of edges in one of the colors and non-smaller densities of edges in all the other colors. 

For shortening we denote $d_i = d_i(G)$, $b_i = \sum_{j \in [k], j \neq i} b_{ij}$, and $c_i = a_i + b_i + x$. In other words, $b_i$ is the total weight of vertices incident to color $i$ and some one another color, while $c_i$ is the total weight of vertices incident to color $i$ (including $x$ even if $a_i=0$). Additionally, let $c = \min_{i \in [k]} c_i$.

Assumption that Theorem~\ref{thm:main_weight} does not hold implies that $d_i > f(k)$ for every $i \in [k]$.
We will prove a series of claims on weights of the vertices of $G$, which will be used in further sections to obtain a contradiction for each value of $k$. 

\begin{claim}\label{cla:gen:basic_bound}
For every $i \in [k]$ it holds
\[c_i > \sqrt{f(k) + 2 b_i x + x^2} \geq \sqrt{f(k) + x^2}.\]
\end{claim}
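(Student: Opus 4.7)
The plan is to expand $c_i^2$ and compare it directly with the expression defining $d_i$. By definition
\[
c_i^2 = (a_i + b_i + x)^2 = a_i^2 + b_i^2 + x^2 + 2a_i b_i + 2a_i x + 2 b_i x,
\]
while
\[
d_i = a_i^2 + \sum_{j \in [k]\setminus\{i\}} b_{ij}^2 + 2a_i b_i + 2a_i x,
\]
using that $\sum_{j\neq i}b_{ij} = b_i$ in the cross term. Subtracting,
\[
c_i^2 - d_i = b_i^2 - \sum_{j\in[k]\setminus\{i\}} b_{ij}^2 + x^2 + 2 b_i x.
\]

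The first key step is then to observe that, since all weights $b_{ij}$ are non-negative, the square of the sum dominates the sum of squares: $b_i^2 = \bigl(\sum_{j\neq i} b_{ij}\bigr)^2 \geq \sum_{j\neq i} b_{ij}^2$. This yields $c_i^2 - d_i \geq x^2 + 2 b_i x$, and combining with the standing hypothesis $d_i > f(k)$ gives $c_i^2 > f(k) + 2 b_i x + x^2$. Since $c_i \geq 0$, taking square roots produces the first inequality of the claim. The second inequality is immediate from $b_i, x \geq 0$, which forces $2 b_i x \geq 0$.

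There is essentially no obstacle here; the argument is a direct algebraic manipulation, and the only step that requires any thought is recognizing the sum-of-squares comparison $b_i^2 \geq \sum_{j\neq i} b_{ij}^2$, which is exactly what converts the diagonal quadratic form appearing in $d_i$ into the full square appearing in $c_i^2$.
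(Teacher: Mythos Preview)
Your proof is correct and follows essentially the same approach as the paper: both arguments reduce to the inequality $d_i \leq c_i^2 - 2b_ix - x^2$, which hinges precisely on $\sum_{j\neq i} b_{ij}^2 \leq b_i^2$. The paper compresses this into a single displayed line, whereas you compute $c_i^2 - d_i$ explicitly and isolate the sum-of-squares comparison, but the content is identical.
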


\begin{proof}
For every $i \in [k]$ we have
\[ f(k) < d_i = a_i^2 + 2a_i b_i + \sum_{j \in [k] \setminus \{i\}} b_{ij}^2 + 2a_i x \leq (a_i + b_i + x)^2 - 2 b_i x - x^2 = c_i^2 - 2b_i x - x^2.\]
Thus, $c_i > \sqrt{f(k) + 2b_i x + x^2}$ for every $i \in [k]$ as desired.
\end{proof}

\begin{claim}\label{cla:gen:aiajbij}
For every $i, j \in [k]$, $i \neq j$ it holds 
\[a_i + a_j + b_{ij} < 1 - \sqrt{\frac{f(k)}{f(k-2)}}.\]
\end{claim}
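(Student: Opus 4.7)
The plan is the standard ``delete two colors and induct'' trick. Since $k$ was chosen minimal with Theorem~\ref{thm:main_weight} failing, the theorem is available for every clustered graph on $k-2$ colors; the strategy is to build from $G$ such a graph $G'$ by deleting the vertices $a_i$, $a_j$, $b_{ij}$, merging every vertex $b_{i\ell}$ and $b_{j\ell}$ into the cluster $a_\ell$ (legal because after erasing colors $i,j$ such a vertex is incident only to color $\ell$), and rescaling by $W:=1-(a_i+a_j+b_{ij})$ so that the new weights sum to $1$. Before proceeding one checks $W>0$: otherwise, for any $\ell\in[k]\setminus\{i,j\}$ (such $\ell$ exists because $k\geq 3$) the vertex $a_\ell$, all $b_{\ell m}$ and $x$ would vanish, giving $d_\ell=0<f(k)$, in contradiction with $d_\ell>f(k)$.

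Concretely, for $\ell,m\in[k]\setminus\{i,j\}$ set
\[
a'_\ell=\frac{a_\ell+b_{i\ell}+b_{j\ell}}{W},\qquad
b'_{\ell m}=\frac{b_{\ell m}}{W},\qquad
x'=\frac{x}{W}.
\]
These weights are nonnegative and sum to $1$, so $G'$ is a legitimate clustered graph for $k-2$ colors. The crucial step is to compare $d_\ell(G')$ with $d_\ell(G)$ for each $\ell\notin\{i,j\}$: each vertex formerly labeled $b_{i\ell}$ (resp.\ $b_{j\ell}$) now lies in the $a'_\ell$ cluster and thereby acquires new color-$\ell$ edges to $b_{j\ell}$ (resp.\ $b_{i\ell}$), to $x$, and to every other $b_{\ell m}$, while no color-$\ell$ edge disappears. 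A direct expansion of the definition of density yields
\[
W^{2}\,d_\ell(G')-d_\ell(G)=2\,b_{i\ell}b_{j\ell}+2(b_{i\ell}+b_{j\ell})\!\left(x+\sum_{m\in[k]\setminus\{i,j,\ell\}}b_{\ell m}\right)\geq 0,
\]
so $d_\ell(G')\geq d_\ell(G)/W^{2}>f(k)/W^{2}$ for every $\ell\in[k]\setminus\{i,j\}$.

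Finally, applying the induction hypothesis to $G'$ gives $\min_{\ell}d_\ell(G')\leq f(k-2)$, whence $f(k)/W^{2}<f(k-2)$, i.e.\ $W>\sqrt{f(k)/f(k-2)}$, which rearranges precisely to the claimed inequality $a_i+a_j+b_{ij}<1-\sqrt{f(k)/f(k-2)}$. The only step that demands genuine care is the density comparison: one must carefully bookkeep which color-$\ell$ edges are contributed by the merged $b_{i\ell}$ and $b_{j\ell}$ vertices after they are reclassified as $a'_\ell$-vertices, and verify that no cross-term is negative. Beyond that bookkeeping — and the short sanity check that $W>0$ — I do not anticipate any further obstacles.
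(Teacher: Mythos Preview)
Your argument is correct and follows the same route as the paper: delete the three clusters $a_i,a_j,b_{ij}$, absorb each $b_{i\ell},b_{j\ell}$ into $a_\ell$, rescale by $W=1-(a_i+a_j+b_{ij})$, and apply the inductive bound $f(k-2)$ to the resulting clustered graph on $k-2$ colors. The only differences are cosmetic: you include an explicit check that $W>0$ and compute the density difference $W^{2}d_\ell(G')-d_\ell(G)$ term by term, whereas the paper simply observes that every cluster weight is scaled up by at least $1/W$, so each nonnegative contribution to $d_\ell$ grows by at least $1/W^{2}$.
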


\begin{proof}
Without loss of generality let $i = k-1$ and $j = k$. 
We will construct a clustered graph for $k-2$ colors $G'$ using the clustered graph $G$ intuitively by removing vertices of weights $a_{k-1}$, $a_k$ and $b_{(k-1)k}$, removing edges in colors $k-1$ and $k$, and rescaling all the weights to be summing to~$1$. Formally, we define the weights of the clustered graph~$G'$ as follows
\begin{eqnarray*}
b'_{ij} &=& \frac{b_{ij}}{1 - (a_{k-1} + a_k + b_{(k-1) k})} \text{ for } i, j \in [k-2], i \neq j,\\
a'_i &=& \frac{a_i + b_{i(k-1)} + b_{ik}}{1 - (a_{k-1} + a_k + b_{(k-1) k})} \text{ for } i \in [k - 2],\\
x' &=& \frac{x}{1 - (a_{k-1} + a_k + b_{(k-1) k})}.
\end{eqnarray*}

Let $p \in [k-2]$ be such that $d_p(G') = \min_{i \in [k-2]} d_i(G')$. 
Since the weight of each vertex in $G'$ is at least $\frac{1}{1-(a_{k-1} + a_k + b_{(k-1) k})}$ times bigger then the weight of the respective vertex in $G$, we have 
\[d_p(G') \geq \left(\frac{1}{1-(a_{k-1} + a_k + b_{(k-1) k})}\right)^2 d_p.\] 
Together with the inductive assumption we obtain
\[f(k-2) \geq d_p(G') \geq \left(\frac{1}{1 - (a_{k-1} + a_k + b_{(k-1) k})}\right)^2 d_p > \left(\frac{1}{1 - (a_{k-1} + a_k + b_{(k-1) k})}\right)^2f(k).\]
Rearranging the above inequality we get
\[ a_{k-1} + a_k + b_{(k-1)k} < 1 - \sqrt{\frac{f(k)}{f(k-2)}}.\qedhere\]
\end{proof}

\begin{claim}\label{cla:gen:two_ci's}
For every $i, j \in [k]$, $i \neq j$ it holds 
\[ \max\{c_i, c_j\} > \sqrt{f(k) - \left( 1 - \sqrt{\frac{f(k)}{f(k-2)}} \right) x} + x.\]
\end{claim}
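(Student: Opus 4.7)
The plan is to sharpen the estimate of Claim~\ref{cla:gen:basic_bound} to the form $(a_\ell+b_\ell)^2 > f(k)-2a_\ell x$ for each $\ell\in\{i,j\}$, and then to exploit Claim~\ref{cla:gen:aiajbij} via a halving step to show that for at least one of the two indices the term $2a_\ell x$ can be bounded above by $\alpha x$, where $\alpha:=1-\sqrt{f(k)/f(k-2)}$. Inserting that index produces the required inequality immediately.

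For the sharper bound I would observe that, for each $\ell\in\{i,j\}$,
\[ (a_\ell+b_\ell)^2 + 2a_\ell x \;=\; a_\ell^2 + 2a_\ell b_\ell + b_\ell^2 + 2a_\ell x \;\geq\; a_\ell^2 + 2a_\ell b_\ell + \sum_{m\neq\ell} b_{\ell m}^2 + 2a_\ell x \;=\; d_\ell \;>\; f(k), \]
where the middle inequality uses $b_\ell^2 = (\sum_{m\neq\ell} b_{\ell m})^2 \geq \sum_{m\neq\ell} b_{\ell m}^2$, which holds since all $b_{\ell m} \geq 0$. Rearranging gives the desired $(a_\ell+b_\ell)^2 > f(k)-2a_\ell x$. (Equivalently, this bound can be extracted directly from Claim~\ref{cla:gen:basic_bound} by expanding $c_\ell=a_\ell+b_\ell+x$ and cancelling $2b_\ell x + x^2$ from both sides.)

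To finish, Claim~\ref{cla:gen:aiajbij} yields $a_i + a_j \leq a_i + a_j + b_{ij} < \alpha$, so $\min(a_i,a_j) < \alpha/2$; without loss of generality say $a_i < \alpha/2$. Then $2a_i x < \alpha x$, and the sharper bound becomes $(a_i+b_i)^2 > f(k)-\alpha x$. Taking square roots (in the degenerate range $f(k) < \alpha x$ the claim is vacuously true since $c_i \geq x$) and adding $x$ to both sides gives $c_i = a_i + b_i + x > \sqrt{f(k)-\alpha x} + x$, whence $\max\{c_i,c_j\} \geq c_i$ meets the required bound. I do not foresee any substantial obstacle; beyond routine algebra, the only genuine idea is the halving step upgrading the a priori estimate $a_\ell \leq \alpha$ from Claim~\ref{cla:gen:aiajbij} to $a_\ell < \alpha/2$ for the smaller of $a_i$ and $a_j$.
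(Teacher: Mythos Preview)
Your proof is correct and follows essentially the same approach as the paper: both derive the key inequality $(a_\ell+b_\ell)^2 > f(k)-2a_\ell x$ from the density bound and then feed in Claim~\ref{cla:gen:aiajbij}. The only cosmetic difference is that the paper sums the two inequalities and uses $\max\{(a_i+b_i)^2,(a_j+b_j)^2\}\ge \tfrac12\big((a_i+b_i)^2+(a_j+b_j)^2\big)$ together with $a_i+a_j<\alpha$, whereas you halve first to get $\min(a_i,a_j)<\alpha/2$ and apply the bound to that single index; both routes land on the same conclusion.
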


\begin{proof}
Bounding the density of edges in colors $i$ and $j$ we obtain
\[ f(k) < d_\ell \leq (a_\ell + b_\ell)^2 + 2a_\ell x, \text{ for } \ell \in \{i, j\}.\]
Summing up these inequalities and using the estimate $a_i + a_j < 1 - \sqrt{\frac{f(k)}{f(k-2)}}$ from Claim \ref{cla:gen:aiajbij} we obtain
\[2f(k) < (a_i + b_i)^2 + (a_j + b_j)^2 + 2\left(1 - \sqrt{\frac{f(k)}{f(k-2)}}\right) x,\]
which implies
\[\max\{a_i + b_i, a_j + b_j\} > \sqrt{f(k) - \left(1 - \sqrt{\frac{f(k)}{f(k-2)}}\right)x}.\]
As $c_\ell = a_\ell + b_\ell + x$ for $\ell \in \{i, j\}$ from the above estimate we get
\[\max\{c_i, c_j\} > \sqrt{f(k)-\left(1-\sqrt{\frac{f(k)}{f(k-2)}}\right) x} + x,\]
as desired.
\end{proof}

\begin{claim}\label{cla:gen:numbij}
If $a_ix = 0$ for some $i \in [k]$, then there exist $j, \ell \in [k] \setminus \{i\}$, $j \neq \ell$ such that $b_{ij} > 0$ and $b_{i\ell} > 0$.
\end{claim}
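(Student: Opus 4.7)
The plan is a contradiction argument. Suppose that $a_i x = 0$ and that at most one index $j \in [k]\setminus\{i\}$ satisfies $b_{ij}>0$; denote any such index by $j_0$, or pick $j_0 \in [k]\setminus\{i\}$ arbitrarily and set $b_{ij_0}=0$ if no such index exists. I will show that this forces $d_i \leq f(k)$, which contradicts the standing assumption $d_i > f(k)$.

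The first step is to simplify the density formula. Since $b_{ij}=0$ for every $j \neq i, j_0$ and $2a_i x = 0$ by assumption, the formula for $d_i$ collapses to
\[d_i = a_i^2 + b_{ij_0}^2 + 2 a_i b_{ij_0} = (a_i + b_{ij_0})^2.\]
The natural way to bound $a_i + b_{ij_0}$ is to apply Claim~\ref{cla:gen:aiajbij} to the pair $(i, j_0)$, which yields
\[a_i + b_{ij_0} \;\leq\; a_i + a_{j_0} + b_{ij_0} \;<\; 1 - \sqrt{\frac{f(k)}{f(k-2)}},\]
so that $d_i < \bigl(1 - \sqrt{f(k)/f(k-2)}\bigr)^2$.

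The remaining—and only nontrivial—task is to verify the inequality
\[1 - \sqrt{\frac{f(k)}{f(k-2)}} \;\leq\; \sqrt{f(k)}\qquad \text{for every } k \geq 3,\]
since this immediately gives $d_i < f(k)$ and closes the contradiction. I would check it by following the piecewise definition of $f$: for $k\in\{3,4\}$ both sides simplify to $\tfrac12$ (with $f(1)=f(2)=1$); for $k\in\{5,6\}$ one gets $\tfrac13 + \tfrac23 = 1$; for $k\in\{7,8\}$ a mixed case reduces to $\sqrt{2k-1}\leq 4$, i.e.\ $13,15\leq 16$; and for $k\geq 9$ it reduces to $\sqrt{2k-1} - \sqrt{2k-5} \leq 1$, which follows from $\sqrt{2k-1}+\sqrt{2k-5} \geq 4$ being true already at $k=9$ and monotone in $k$. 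The only place the proof requires care is this final case split tied to the piecewise definition of $f$; the rest of the argument is a one-line use of Claim~\ref{cla:gen:aiajbij} after collapsing the sum to a single square $(a_i+b_{ij_0})^2$.
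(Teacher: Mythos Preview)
Your proof is correct and follows essentially the same approach as the paper's: assume the negation, collapse $d_i$ to the single square $(a_i+b_{ij_0})^2$, bound it via Claim~\ref{cla:gen:aiajbij}, and then verify the numerical inequality $\sqrt{f(k)/f(k-2)}+\sqrt{f(k)}\ge 1$ by the same case split on $k$. The only cosmetic difference is in the $k\ge 9$ step, where the paper writes the inequality as $\frac{\sqrt{2k-5}+1}{\sqrt{2k-1}}>1$ while you reduce it to $\sqrt{2k-1}+\sqrt{2k-5}\ge 4$ via the difference-of-squares identity; these are equivalent.
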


\begin{proof}
Assuming that the claim does not hold, we get that $a_ix = 0$ and for some $j \neq i$ and every $\ell \neq i, j$ we have $b_{i\ell}=0$. This, together with Claim~\ref{cla:gen:aiajbij}, implies that 
\[f(k) < d_i = (a_i + b_{ij})^2 < \left( 1 - \sqrt{\frac{f(k)}{f(k-2)}} \right)^2.\]
It means that
\[\sqrt{\frac{f(k)}{f(k-2)}} + \sqrt{f(k)} < 1.\]

It remains to notice that for $k \in \{3, 4, 5, 6\}$ the left-hand side of the above inequality is equal to $1$, for $k \in \{7, 8\}$ it is equal to $\frac{4}{\sqrt{13}}$ and $\frac{4}{\sqrt{15}}$, respectively, which are both greater than $1$, while for $k \geq 9$ we obtain
\[\sqrt{\frac{2k - 5}{2k -1}} + \sqrt{\frac{1}{2k - 1}}  = \frac{\sqrt{2k-5} + 1}{\sqrt{2k-1}} > 1.\]
Therefore, we have a contradiction for each $k \geq 3$.
\end{proof}

\begin{claim}\label{cla:gen:non_zero_bij}
There exist $i, j \in [k]$, $i \neq j$ such that $b_{ij} > 0$.
\end{claim}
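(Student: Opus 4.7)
The plan is to argue by contradiction: assume instead that $b_{ij}=0$ for every $i \neq j$. Then $b_i = 0$ for all $i$, so Claim~\ref{cla:gen:basic_bound} specializes to $a_i + x > \sqrt{f(k) + x^2}$ for every $i \in [k]$. Summing these $k$ inequalities and using $\sum_{i \in [k]} a_i = 1 - x$ gives
\[
1 + (k-1)x > k\sqrt{f(k)+x^2}.
\]
Both sides are positive, so squaring and rearranging yields
\[
g(x) := 1 + 2(k-1)x - (2k-1)x^2 > k^2 f(k).
\]

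Next I would maximize $g$ over $x \in \mathbb{R}$. Viewed as a downward parabola, it attains its maximum $\frac{k^2}{2k-1}$ at $x^\ast = \frac{k-1}{2k-1}$. Consequently the inequality forces $f(k) < \frac{1}{2k-1}$.

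Finally, I would check that $f(k) \geq \frac{1}{2k-1}$ for every $k \geq 3$: for $k \in \{3,4,6\}$ the value $\lceil k/2 \rceil^{-2}$ strictly exceeds $\frac{1}{2k-1}$ by direct comparison, while for $k = 5$ and for every $k \geq 7$ one has the equality $f(k) = \frac{1}{2k-1}$. Either way the strict inequality $f(k) < \frac{1}{2k-1}$ is impossible, giving the desired contradiction. The main delicate point is these borderline equality cases ($k = 5$ and $k \geq 7$): there the upper bound on $g$ matches $k^2 f(k)$ exactly, and the argument relies crucially on the strictness of the hypothesis $d_i > f(k)$, which is preserved through summation, squaring, and maximization.
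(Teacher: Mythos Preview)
Your proof is correct and is essentially the same as the paper's. Both assume $b_{ij}=0$ everywhere, reduce the density in color $i$ to $a_i^2+2a_ix$, deduce that the average $\frac{1-x}{k}$ exceeds $\sqrt{f(k)+x^2}-x$ (you via summing Claim~\ref{cla:gen:basic_bound}, the paper via the minimum-density color satisfying $a_p\le\frac{1-x}{k}$), and then maximize over $x$ to obtain the contradiction $f(k)<\frac{1}{2k-1}$; the two derivations are algebraically equivalent.
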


\begin{proof}
Assuming the contrary, that $b_{ij} = 0$ for every $i, j \in [k], i \neq j$, we obtain that the density of edges in any color $i\in[k]$ is equal to $a_i^2 + 2a_ix$. Let $p \in [k]$ be the color of the minimum density, then $a_p$ is the smallest weight out of $a_i$, for $i \in[k]$. As $a_p$ is at most $\frac{1-x}{k}$, it implies that 
\[f(k) < d_i \leq \left(\frac{1-x}{k}\right)^2 + 2\left(\frac{1-x}{k}\right)x.\]
This quadratic expression is maximized for $x = \frac{k-1}{2k-1}$, which gives $f(k) < \frac{1}{2k-1}$ and a contradiction.
\end{proof}

Claim~\ref{cla:gen:non_zero_bij} allows to define 
\[b = \min\{b_{ij}: i, j \in [k], i \neq j, b_{ij} > 0\}.\] 
The knowledge on the number of non-zero values $b_{ij}$ can be used to prove other useful bounds.

\begin{claim}\label{cla:gen:aibi_when_x_zero}
If $a_ix = 0$ for some $i \in [k]$, then 
\[a_i + b_i > \sqrt{f(k) + 2\!\!\sum_{j, \ell \in [k] \setminus \{i\}, j \neq \ell}\!\! b_{ij} b_{i\ell}} \geq \sqrt{f(k) + 2b^2}.\]
\end{claim}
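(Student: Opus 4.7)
The plan is to derive the identity $(a_i + b_i)^2 = d_i + 2\!\sum_{j \neq \ell,\, j,\ell \neq i} b_{ij} b_{i\ell}$ by direct expansion, and then apply the standing inequality $d_i > f(k)$ together with Claim~\ref{cla:gen:numbij}. Using the hypothesis $a_i x = 0$, the formula for the density of edges in color $i$ simplifies to
\[
d_i = a_i^2 + 2 a_i b_i + \sum_{j \in [k] \setminus \{i\}} b_{ij}^2.
\]
On the other hand, expanding the square of $b_i = \sum_{j \neq i} b_{ij}$ gives
\[
b_i^2 = \sum_{j \in [k] \setminus \{i\}} b_{ij}^2 + 2\!\!\sum_{\substack{j, \ell \in [k] \setminus \{i\} \\ j \neq \ell}}\!\! b_{ij} b_{i\ell},
\]
where the cross sum is taken over unordered pairs $\{j,\ell\}$, matching the notational convention of the statement. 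Adding $a_i^2 + 2a_i b_i$ to both sides and substituting the simplified expression for $d_i$ yields exactly
\[
(a_i + b_i)^2 = d_i + 2\!\!\sum_{\substack{j, \ell \in [k] \setminus \{i\} \\ j \neq \ell}}\!\! b_{ij} b_{i\ell}.
\]

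From here, the first inequality follows at once by applying the standing assumption $d_i > f(k)$ and extracting the nonnegative square root (noting $a_i + b_i \geq 0$). For the second inequality, I would invoke Claim~\ref{cla:gen:numbij}, whose triggering hypothesis $a_i x = 0$ is exactly our present hypothesis: it supplies two distinct indices $j, \ell \in [k] \setminus \{i\}$ with $b_{ij} > 0$ and $b_{i\ell} > 0$. By definition of $b = \min\{b_{pq} : b_{pq} > 0\}$, both factors are at least $b$, so the single unordered pair $\{j, \ell\}$ already contributes $b_{ij} b_{i\ell} \geq b^2$ to the cross sum, and after the prefactor $2$ we get the required $2b^2$ lower bound.

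The entire argument is essentially algebraic; no real obstacle is expected. The only point requiring care is the bookkeeping between ordered and unordered pairs in order to match the factor of $2$ appearing in the statement, and this is dealt with at the expansion step above. All of the non-trivial content is pushed into the previously established density formula and into Claim~\ref{cla:gen:numbij}, both of which are used as black boxes.
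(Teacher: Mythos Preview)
Your proposal is correct and follows essentially the same approach as the paper: simplify $d_i$ using $a_ix=0$, rewrite $(a_i+b_i)^2$ as $d_i$ plus the cross terms $2\sum_{j\neq\ell}b_{ij}b_{i\ell}$, apply $d_i>f(k)$, and then use Claim~\ref{cla:gen:numbij} together with the definition of $b$ to bound the cross sum by $2b^2$. Your explicit remark about the unordered-pair convention for the factor~$2$ is a fine clarification and matches the paper's usage.
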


\begin{proof}
Since $a_ix = 0$, it is possible to express $d_i$ in the following way
\[f(k) < d_i = a_i^2 + \sum_{j \in [k] \setminus \{i\}} b_{ij}^2 + 2\sum_{j \in [k] \setminus \{i\}} a_i b_{ij} =(a_i + b_i)^2 - 2\!\!\sum_{j,\ell \in [k] \setminus \{i\}, j \neq \ell}\!\! b_{ij} b_{i\ell}.\]
It implies that
\[a_i + b_i > \sqrt{f(k) + 2\!\!\sum_{j, \ell \in [k] \setminus \{i\}, j \neq \ell}\!\! b_{ij} b_{i\ell}}.\]
From Claim \ref{cla:gen:numbij} we know that there exist $j, \ell \in [k] \setminus \{i\}$, $j \neq \ell$ such that $b_{ij} > 0$, $b_{i\ell} > 0$ and both of them are at least $b$, so
\[\sqrt{f(k) + 2\!\!\sum_{j, \ell \in [k] \setminus \{i\}, j \neq \ell} b_{ij} b_{i\ell}} \geq \sqrt{f(k) + 2b^2},\]
as desired.
\end{proof}

In order to provide more bounds, we need to introduce the operation of removing and adding weights in a clustered graph for $k$ colors. 
Intuitively, we remove a tiny weight from some of the vertices of positive weight and add it to different vertices. From the maximality of $G$, such operation cannot enlarge the density of edges in each color, so the density of edges in at least one color needs to drop down (or the densities of edges in every color remain the same). Moreover, since the performed change is arbitrarily tiny, we do not need to calculate the exact change of the density of edges in each color, but only its main term of behavior.

\begin{definition}
For a subset $S \subset V(G)$ of vertices of positive weights, we say that we \emph{remove} weights~$w_v$ from $v$ for $v \in S$ and \emph{add} weights $w'_u$ to $u$ for $u \in T \subset V(G)$, where $\sum_{v \in S} w_v = \sum_{u \in T} w'_u$, if we construct a new clustered graph $H_\epsilon$ from $G$ by subtracting the weight $\epsilon w_v$ from the weight of~$v$ for $v \in S$, and adding the weight $\epsilon w'_u$ to the weight of $u$ for $u \in T$, where $\epsilon$ is an arbitrary small positive number. 
The difference of the densities of edges in each color in $H_\epsilon$ and $G$ is a polynomial function of~$\epsilon$. 
By the \emph{increment in color} $i \in [k]$ we define a half of the coefficient of the linear term in this difference. Since $\epsilon$ can be arbitrarily small and $G$ is maximal, it is impossible that the increment is positive in each color appearing on edges incident to $S$ and $T$.
\end{definition}

To illustrate how one can use the above operation, we prove the following claim.

\begin{claim}\label{cla:gen:aiaj2bi_lower_bound}
If $b_{ij} > 0$ for some $i, j \in [k]$, $i \neq j$, then $a_i + a_j + 2b_{ij} \geq \min\{c_i, c_j\}$.
\end{claim}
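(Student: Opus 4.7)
The plan is to apply the weight-moving operation from the preceding definition with a one-parameter family of target distributions. Without loss of generality assume $c_i\leq c_j$, so that $\min\{c_i,c_j\}=c_i$, and suppose for contradiction that $a_i+a_j+2b_{ij}<c_i$.

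Concretely, I would remove a weight $\epsilon$ from the vertex of weight $b_{ij}$ (permissible for small $\epsilon>0$ since $b_{ij}>0$) and redistribute it by adding $\alpha\epsilon$ to the $a_i$-vertex and $(1-\alpha)\epsilon$ to the $a_j$-vertex, where $\alpha\in[0,1]$ is a parameter to be chosen at the end. All weights remain non-negative and still sum to $1$, so the resulting graph is a valid clustered graph. Moreover, the set $S\cup T$ of touched vertices is incident only to edges of colors $i$ and $j$, hence no other color's density changes to first order.

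A direct expansion of $d_i$ shows that the linear-in-$\epsilon$ coefficient equals
\[
2\bigl(\alpha(a_i+b_i+x)-a_i-b_{ij}\bigr) \;=\; 2(\alpha c_i-a_i-b_{ij}),
\]
and symmetrically the increment in color $j$ equals $2((1-\alpha)c_j-a_j-b_{ij})$. The one step that requires care is collecting the contributions from $a_i^2$, $b_{ij}^2$, $2a_ib_i$ and $2a_ix$: the cross term $2a_ib_i$ contributes both $-2\epsilon a_i$ (from $b_{ij}$ decreasing) and $+2\alpha\epsilon b_i$ (from $a_i$ increasing), and only after combining with the other three pieces does the expression telescope neatly to $\alpha c_i-a_i-b_{ij}$.

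By the maximality of $G$, these two increments cannot both be strictly positive for any $\alpha\in[0,1]$, which forces the interval of admissible $\alpha$'s to be empty, i.e.,
\[
\frac{a_i+b_{ij}}{c_i}+\frac{a_j+b_{ij}}{c_j}\;\geq\;1.
\]
Since $c_j\geq c_i>0$ (positivity of $c_i$ coming from $b_{ij}>0$), the left-hand side is bounded above by $(a_i+a_j+2b_{ij})/c_i$, giving $a_i+a_j+2b_{ij}\geq c_i=\min\{c_i,c_j\}$ and contradicting the assumption. The main obstacle is the linear-order computation of the increments; everything else is a one-line rearrangement.
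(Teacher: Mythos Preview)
Your proof is correct and uses the same weight-shifting operation as the paper: move mass from $b_{ij}$ into $a_i$ and $a_j$ and read off the first-order increments. The only difference is cosmetic: the paper fixes the split ratio $(a_i+b_{ij}):(a_j+b_{ij})$ from the start, which makes both increments share the common factor $c_\ell-(a_i+a_j+2b_{ij})$ and hence the same sign, whereas you parametrize by $\alpha$ and deduce the inequality $\frac{a_i+b_{ij}}{c_i}+\frac{a_j+b_{ij}}{c_j}\ge 1$ before using $c_j\ge c_i$; both routes are one-step arguments of the same length.
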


\begin{proof}
Consider removing weight $a_i + a_j + 2b_{ij}$ from the vertex of weight $b_{ij} > 0$ and adding weight $a_i + b_{ij}$ to the vertex of weight $a_i$ and weight $a_j + b_{ij}$ to the vertex of weight $a_j$. 
The increment in color $i$ is equal to 
\[-(a_i+b_{ij})(a_i + a_j + 2b_{ij}) + (a_i + b_i + x)(a_i+b_{ij}) = (a_i+b_{ij})(c_i - (a_i+a_j+2b_{ij})).\]
Similarly, the increment in color $j$ is equal to $(a_j+b_{ij})(c_j - (a_i+a_j+2b_{ij}))$.

If $a_i + a_j + 2b_{ij} < \min\{c_i, c_j\}$, then both those increments are positive, which contradicts the maximality of $G$. Therefore, $a_i + a_j + 2b_{ij} \geq \min \{c_i, c_j\}$.
\end{proof}

Using similar approach we can prove useful lower bounds for the sum of the weights~$a_i$.

\begin{claim}\label{cla:gen:sum_of_ai}
If $x > 0$, then $\sum_{i \in [k]} a_i \geq c$. 
If $x = 0$, then $\sum_{i \in [k]} a_i \geq c - 2b$.
\end{claim}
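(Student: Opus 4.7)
The plan is to argue by contradiction using the remove/add operation in the style of Claim~\ref{cla:gen:aiaj2bi_lower_bound}. In both cases I would redistribute a tiny amount of weight from one designated ``reservoir'' vertex onto the vertices of weights $a_1, \ldots, a_k$, choosing the distribution coefficients so that the linear increment is strictly positive in every color of $G$, contradicting the maximality of $G$. The only thing that differs between the two cases is which vertex serves as the reservoir: the $x$-vertex when $x>0$, and a minimum-weight $b_{ij}$-vertex when $x=0$. Note that $c_i>0$ for every $i$, since $c_i=0$ would force $d_i=0<f(k)$.

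For the case $x>0$, remove total weight $w$ from the $x$-vertex and add weight $\epsilon w_i$ to the $a_i$-vertex for each $i$, where $\sum_{i\in[k]} w_i = w$. A direct expansion of the formula for $d_i$ shows that the linear term in $d_i(H_\epsilon)-d_i(G)$ equals $2(w_i c_i - a_i w)$, so all increments are strictly positive provided one can choose $w_i > a_i w / c_i$ for every $i$ with $\sum_i w_i = w$. Since each $c_i>0$, this feasibility condition reduces to $\sum_i a_i/c_i < 1$. If one had $\sum_i a_i < c$, then $\sum_i a_i/c_i \leq \sum_i a_i / c < 1$ and a valid choice exists, contradicting maximality; hence $\sum_i a_i \geq c$.

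For the case $x=0$, pick a pair $i_0\neq j_0$ with $b_{i_0j_0} = b > 0$, which exists by Claim~\ref{cla:gen:non_zero_bij} together with the definition of $b$. Remove total weight $w$ from this $b_{i_0j_0}$-vertex and add weight $\epsilon w_\ell$ to $a_\ell$ for each $\ell\in[k]$, subject to $\sum_\ell w_\ell = w$. The extra subtlety is that shrinking $b_{i_0j_0}$ also shrinks $b_{i_0}$ and $b_{j_0}$; collecting the contributions from $b_{i_0j_0}^2$ and from $2a_{i_0}b_{i_0}$ (and symmetrically for $j_0$) yields increments $2(w_{i_0}c_{i_0} - w(a_{i_0}+b))$ and $2(w_{j_0}c_{j_0} - w(a_{j_0}+b))$ in colors $i_0,j_0$, while for any other $\ell$ the increment is simply $2w_\ell c_\ell$. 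All increments being positive reduces to $(a_{i_0}+b)/c_{i_0}+(a_{j_0}+b)/c_{j_0} < 1$, which, using $c_{i_0},c_{j_0}\geq c$, is implied by $a_{i_0}+a_{j_0}+2b \leq \sum_\ell a_\ell + 2b < c$. So if $\sum_\ell a_\ell < c - 2b$ one again contradicts maximality, giving $\sum_\ell a_\ell \geq c - 2b$.

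The main obstacle is the bookkeeping of the linear increments, particularly in the $x=0$ case, where one must track both the direct loss in $b_{i_0j_0}^2$ and the indirect loss through the mixed products $2a_{i_0}b_{i_0}$ and $2a_{j_0}b_{j_0}$. Once the increment formulas are in place, feasibility of the coefficient choice is an immediate consequence of the hypothesized strict inequality on $\sum_i a_i$, and the whole argument reduces to a clean invocation of the maximality framework already set up.
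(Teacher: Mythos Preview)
Your argument is correct. For the case $x>0$ it is essentially the paper's approach: both remove weight from $x$ and add to the $a_i$-vertices, the only difference being that the paper picks the explicit weights $w_i=a_i$ (and thus has to argue separately that not all $a_i$ vanish), whereas you phrase it as a feasibility condition $\sum_i a_i/c_i<1$ on general coefficients, which automatically covers the all-zero subcase.

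For $x=0$ your route genuinely differs. The paper does not perform a new remove/add operation at all; it simply cites Claim~\ref{cla:gen:aiaj2bi_lower_bound}, which for the pair $p,q$ with $b_{pq}=b$ gives $a_p+a_q+2b\ge\min\{c_p,c_q\}\ge c$, and hence $\sum_i a_i\ge a_p+a_q\ge c-2b$ in one line. You instead set up a direct redistribution from the $b_{i_0j_0}$-vertex to all $a_\ell$-vertices and reduce positivity of all increments to $(a_{i_0}+b)/c_{i_0}+(a_{j_0}+b)/c_{j_0}<1$; this is in effect a reproof of (a weak form of) Claim~\ref{cla:gen:aiaj2bi_lower_bound} inline. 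Your version has the virtue of using a single template for both cases, while the paper's is shorter because the needed inequality has already been isolated as a separate claim.
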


\begin{proof}
Firstly, let us assume that $x > 0$. Observe that it is not possible that $a_i = 0$ for every $i \in [k]$, since otherwise, removing a unit weight from the vertex of weight $x$ and adding weight $\frac{1}{k}$ to each vertex of weight $a_i$ for $i \in [k]$ gives positive increments in every color contradicting the maximality of~$G$.
Thus, consider removing weight $\sum_{i \in [k]} a_i$ from the vertex of weight $x$ and adding weight $a_i$ to each vertex of non-zero weight $a_i$ for $i \in [k]$. 
The density of edges in color $i \in [k]$ for which $a_i = 0$ has not changed. Therefore, there exists $j \in [k]$ such that $a_j > 0$ and the increment in color $j$ is non-positive, i.e., 
\[-a_j\sum_{i \in [k]} a_i + (a_j + b_j + x)a_j \leq 0.\] 
It implies $\sum_{i \in [k]} a_i \geq a_j+b_j+x \geq c$ as desired.

Now we assume that $x = 0$. From Claim \ref{cla:gen:non_zero_bij} we know that there exist $p, q \in [k]$ such that $p \neq q$ and $b_{pq} = b > 0$.
Then,
\[\sum_{i \in [k]} a_i \geq a_p + a_q = a_p + a_q + 2b_{pq} - 2b_{pq} \geq c - 2b_{pq} = c - 2b,\]
where the last inequality comes from Claim \ref{cla:gen:aiaj2bi_lower_bound}.
\end{proof}

The last general claim gives an upper bound on $x$.

\begin{claim}\label{cla:gen_bound_for_x}
$x < 1 - \sqrt{\frac{f(k)}{f(k-2)}}$.
\end{claim}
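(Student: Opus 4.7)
The plan is to mimic the construction strategy of Claim~\ref{cla:gen:aiajbij}, but arranging the construction so that the dropped weight includes $x$. If we can produce a clustered graph for $k-2$ colors in which the minimum density is bounded below by $\min_j d_j(G)/s^2$ with $s = 1 - (x + a_p + a_q + b_{pq})$ for some pair $p, q$, the inductive hypothesis will give $f(k-2) \ge \min_j d_j(G)/s^2 > f(k)/s^2$, hence $s > \sqrt{f(k)/f(k-2)}$, whence $x + a_p + a_q + b_{pq} < 1 - \sqrt{f(k)/f(k-2)}$ and the claim follows by non-negativity of $a_p, a_q, b_{pq}$.

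Concretely, fix a pair $p, q \in [k]$ and construct $G'$ by dropping the central vertex $x$ together with the vertices $a_p, a_q, b_{pq}$, merging each $b_{ip}$ and $b_{iq}$ (for $i \in [k] \setminus \{p,q\}$) into $a_i$, and rescaling every remaining weight by $1/s$. The merge is natural: after removing colors $p$ and $q$, the vertices $b_{ip}$ and $b_{iq}$ become incident only to the remaining color $i$, so they behave like $a_i$-type vertices. A direct expansion, parallel to the one carried out in Claim~\ref{cla:gen:aiajbij}, yields for every $i \in [k-2]$ the identity
\[
s^2 d_i(G') = d_i(G) + 2 b_{ip} b_{iq} + 2(b_{ip} + b_{iq})\!\!\sum_{j \in [k] \setminus \{i, p, q\}}\!\! b_{ij} - 2 a_i x.
\]

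The hard part is the density-preservation step, namely showing that the pair $p, q$ can be chosen so that $\min_{i \in [k-2]} s^2 d_i(G') \ge \min_{j \in [k]} d_j(G)$. The correction term $2 b_{ip} b_{iq} + 2(b_{ip}+b_{iq})\sum b_{ij} - 2 a_i x$ is not automatically non-negative; its sign depends on the relative size of $a_i x$ versus the $b$-terms and on the choice of $p, q$. Overcoming this will rely on a careful selection of $p, q$ (for instance, Claim~\ref{cla:gen:non_zero_bij} gives a pair with $b_{pq} > 0$, which is a natural candidate) combined with the earlier estimates in this section: Claim~\ref{cla:gen:aiaj2bi_lower_bound} relating $a_p + a_q + 2 b_{pq}$ to $c_p, c_q$, Claim~\ref{cla:gen:aibi_when_x_zero} giving a lower bound on $a_i + b_i$ when $a_i x = 0$, and Claim~\ref{cla:gen:sum_of_ai} bounding $\sum_i a_i$ from below; together these should control $a_i x$ in terms of the $b$-quantities in the correction at the critical index $i$ minimizing $d_i(G')$. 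Once this is done, the bound $s > \sqrt{f(k)/f(k-2)}$ follows by induction and Claim~\ref{cla_gen_bound_for_x} is proved.
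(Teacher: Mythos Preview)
Your proposal has a genuine gap at exactly the point you flag as ``the hard part.'' The correction term $2b_{ip}b_{iq} + 2(b_{ip}+b_{iq})\sum_j b_{ij} - 2a_i x$ can be strictly negative, and the claims you cite do not obviously rescue this. Concretely, suppose the only nonzero $b$-weight is $b_{12}$, with $x>0$ and $a_i>0$ for every $i$ (this is consistent with Claims~\ref{cla:gen:numbij}--\ref{cla:gen:sum_of_ai}, since those only force nonzero $b_{ij}$'s when $a_ix=0$). Then the only reasonable choice is $\{p,q\}=\{1,2\}$, and for every $i\geq 3$ one gets $b_{ip}=b_{iq}=0$, so $s^2 d_i(G') = d_i(G) - 2a_i x$. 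If the colors $i\geq 3$ have densities near the minimum (in a maximal $G$ many densities can coincide), this drops $s^2 d_i(G')$ strictly below $\min_j d_j(G)$, and your chain $f(k-2)\geq \min_i d_i(G') \geq \min_j d_j(G)/s^2$ breaks. None of Claims~\ref{cla:gen:aiaj2bi_lower_bound}, \ref{cla:gen:aibi_when_x_zero}, or \ref{cla:gen:sum_of_ai} bounds $a_i x$ by the relevant $b$-quantities here, because those $b$-quantities vanish.

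The paper's proof takes a completely different and much shorter route: it does not build a $(k-2)$-color graph at all. Instead, it picks $i,j$ with $b_{ij}>0$ (Claim~\ref{cla:gen:non_zero_bij}) and applies the local perturbation operation---remove weight $2$ from $b_{ij}$, add weight $1$ to each of $a_i$ and $a_j$. The increments in colors $i$ and $j$ are both at least $x - (a_i + a_j + b_{ij})$, so maximality of $G$ forces $x \leq a_i + a_j + b_{ij}$. Then Claim~\ref{cla:gen:aiajbij}, already proved, gives $a_i + a_j + b_{ij} < 1 - \sqrt{f(k)/f(k-2)}$, and the claim follows in two lines. In other words, the induction-via-reduction work was already done in Claim~\ref{cla:gen:aiajbij}; here one only needs a perturbation argument linking $x$ to $a_i+a_j+b_{ij}$.
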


\begin{proof}
From Claim \ref{cla:gen:non_zero_bij} there exist $i, j \in [k], i \neq j$, for which $b_{ij} > 0$. 
Remove weight $2$ from the vertex of weight $b_{ij}$ and add unit weight to each vertex of weight $a_i$ and $a_j$. The increment in color $i$ is equal to 
\[x + a_i + b_i - 2(a_i + b_{ij}) \geq x - a_i - b_{ij} \geq x - (a_i + a_j + b_{ij}).\]
Similarly, the increment in color $j$ is at least $x - (a_i + a_j + b_{ij})$. 
Therefore, the above value must be non-positive, so $x \leq a_i + a_j + b_{ij}$. 
Additionally $a_i + a_j + b_{ij} < 1 - \sqrt{\frac{f(k)}{f(k-2)}}$ from Claim \ref{cla:gen:aiajbij}, which gives the desired bound on $x$. 
\end{proof}

\section{Three and four colors}\label{sec:34}

Firstly we are going to finish the proof of Theorem~\ref{thm:main_weight} if $k=3$. In this case $f(3) = \frac{1}{4}$, so our conjectured clustered graph $G$ satisfies $\min_{i \in [3]} d_i > \frac{1}{4}$, which implies $c_i > \frac{1}{2}$ for every $i \in [3]$. 

\begin{claim}\label{cla:3col:exists_zero_ai}
There exists $i \in [3]$ such that $a_i = 0$.
\end{claim}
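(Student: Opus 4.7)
The plan is to argue by contradiction: assume $a_i > 0$ for every $i \in [3]$ and exhibit an operation (in the sense of Section~\ref{sec:general}) whose first-order increment is strictly positive in every color appearing on an edge incident to $S \cup T$, contradicting the maximality of $G$.

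My first attempt is the fully symmetric operation: take $S = \{a_1, a_2, a_3\}$ and $T = \{b_{12}, b_{13}, b_{23}\}$, and remove $\epsilon$ from each $a_i$ (possible since all three are positive) while adding $\epsilon$ to each $b_{ij}$. Using $c_i = a_i + b_i + x$, the increment in color $i$ simplifies to
\[
-2\epsilon c_i + 2\epsilon(b_{ij}+a_i) + 2\epsilon(b_{ik}+a_i) \;=\; 2\epsilon(b_i + 2a_i - c_i) \;=\; 2\epsilon(a_i - x).
\]
All three colors appear on edges incident to $S \cup T$, so maximality demands $a_i \leq x$ for some~$i$. When $x = 0$ this immediately gives $a_i = 0$ for some $i$ and the claim is proved.

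In the subcase $x > 0$, the above operation alone only yields $\min_i a_i \leq x$. To finish, the plan is to combine it with the second part of the maximality of $G$. For any $i$ with $a_i > 0$, the operation that removes $\epsilon$ from $a_i$ and adds it to $b_{jk}$ (for $\{j,k\} = [3] \setminus \{i\}$) strictly increases both $d_j$ and $d_k$ while decreasing $d_i$ by $2\epsilon c_i + O(\epsilon^2)$; if $d_i > m := \min_\ell d_\ell$ strictly, choosing $\epsilon$ small enough gives $\min_\ell d_\ell > m$, contradicting maximality. Hence $d_i = m$ for every $i$ with $a_i > 0$, and in particular $d_1 = d_2 = d_3 = m$ in the current case.

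Together with $1/2 < \sum a_i < 1/2 + x$ (from Claims~\ref{cla:gen:sum_of_ai} and~\ref{cla:gen:aiajbij}) and $x < 1/2$ (Claim~\ref{cla:gen_bound_for_x}), this leaves a highly constrained configuration. The final step I would take is to construct a two-parameter infinitesimal operation---simultaneously shifting a bit of weight from $a_3$ toward one of $b_{13}, b_{23}$ or $x$, and rebalancing $b_{12}$ against $b_{13}$ or $b_{23}$---whose coefficients can be tuned so that, subject to the equalities $d_i = m$ and the bounds above, the first-order increments in all three colors are strictly positive, yielding the desired contradiction. The main obstacle I anticipate is identifying a single such operation that works uniformly across the residual sub-configurations (depending on which $b_{ij}$ vanish and on the precise relationship between the $a_i$ and $x$), rather than having to split into a lengthy case analysis.
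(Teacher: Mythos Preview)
Your treatment of the case $x=0$ is correct and in fact slicker than the paper's: the symmetric shift from the $a_i$ to the $b_{ij}$ gives increments $2\epsilon(a_i-x)=2\epsilon a_i>0$ in every color, an immediate contradiction.

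The case $x>0$, however, is left genuinely open. You correctly derive $d_1=d_2=d_3$ from the single-color shifts $a_i\to b_{jk}$, and you collect the bounds $\sum a_i>\tfrac12$ and $x<\tfrac12$, but the ``two-parameter infinitesimal operation'' you propose as a final step is not specified, and you yourself flag that finding one that works uniformly may devolve into a case analysis. As it stands there is no argument ruling out the $x>0$ configurations.

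The paper closes this case with a different, single operation and no case split. Remove a unit weight from each $a_i$ and redistribute the total of $3$ units \emph{proportionally} over all vertices (so each weight $w$ receives $3w$). A direct computation shows the increment in color $i$ equals (up to a positive factor) $3d_i-c_i$; hence maximality forces $c_i\geq 3d_i>\tfrac34$ for some $i$, say $c_3>\tfrac34$. Now use only the already-available inequalities $c_1,c_2>\sqrt{\tfrac14+x^2}$ from Claim~\ref{cla:gen:basic_bound} and $\sum_{i}a_i\geq c>\sqrt{\tfrac14+x^2}$ from Claim~\ref{cla:gen:sum_of_ai}. Adding $c_1+c_2+c_3+\sum_i a_i$ counts each $a_i$ and each $b_{ij}$ twice and $x$ three times, so the left side is exactly $2+x$; the right side is $\tfrac34+3\sqrt{\tfrac14+x^2}$. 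The resulting inequality $2+x>\tfrac34+3\sqrt{\tfrac14+x^2}$ has no real solution, finishing the case $x>0$ without any further subdivision. This ``add back proportionally'' trick---turning the increment into $k\,d_i-c_i$---is the missing idea in your approach; note it would also combine with your observation $d_1=d_2=d_3$ to give the same bound, but the equality of the $d_i$ is not actually needed.
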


\begin{proof}
Let us assume by contradiction that $a_i > 0$ for every $i \in [3]$. We remove a unit weight from each vertex of weight $a_i$ for $i \in [3]$ and add the removed weights to every vertex of the clustered graph (including the vertices of weight $a_i$) proportionally to its weight, i.e., weight $3a_i$ to each vertex of weight $a_i$, weight $3b_{ij}$ to each vertex of weight $b_{ij}$ and weight $3x$ to the vertex of weight $x$. 
The increment in color $1$ is equal to
\[3a_1(a_1 + b_{12} + b_{13} + x) + 3b_{12}(a_1 + b_{12}) + 3b_{13}(a_1 + b_{13}) + 3xa_1 - c_1 = 3d_1 - c_1.\]
Similarly, the increments in colors $2$ and $3$ are equal to $3d_2 - c_2$ and $3d_3 - c_3$ respectively.

To avoid contradiction, there exists $i \in [3]$ such that $3d_i - c_i \leq 0$, which implies that $c_i > \frac{3}{4}$ as $d_i >\frac{1}{4}$. Without loss of generality let us assume that $c_3 > \frac{3}{4}$.

If $x > 0$, then from Claim \ref{cla:gen:basic_bound}, $c_i > \sqrt{\frac{1}{4} + x^2}$ for $i \in [2]$ and from Claim \ref{cla:gen:sum_of_ai}, $\sum_{i \in [3]} a_i \geq c > \sqrt{\frac{1}{4}+x^2}$. 
Summing up the obtained inequalities for $c_i$, $i \in [3]$, and for $\sum_{i = 1}^3 a_i$, we obtain on the left-hand side each of the terms $a_i$ and $b_{ij}$ twice, while $x$ three times. Hence, we get
\[2 + x > \frac{3}{4} + 3\sqrt{\frac{1}{4}+x^2}.\]
This inequality has no solutions, which gives a contradiction.

If $x = 0$, then $b_{12} > 0$ as otherwise $a_1 + b_{13} = c_1 > \frac{1}{2}$ and $a_2 + b_{23} = c_2 > \frac{1}{2}$, which is not possible. 
Consider now removing a unit weight from the vertex of weight $b_{12}$ and adding it to each vertex of weight $a_i, b_{ij}$ for $i, j \in [3], i \neq j$ proportionally to its weight. We get a positive increment in color~$3$, while the increments in colors~$1$~and~$2$ are equal to $d_1 - (a_1 + b_{12})$ and $d_2 - (a_2 + b_{12})$, respectively. 
We get a contradiction if both of them are positive, so from symmetry we can assume that 
$d_1 - (a_1 + b_{12}) \leq 0$. It implies that $a_1 + b_{12} > \frac{1}{4}$, which gives a contradiction with $c_3 > \frac{3}{4}$.
\end{proof}

Without loss of generality we can us assume that $a_3 = 0$. 
If $x > 0$, then we can remove a unit weight from the vertex of weight $x$ and add it to the vertex of weight $b_{12}$. This is not changing the density of edges in color $3$, while the increments in colors $1$ and $2$ are positive, which is a contradiction. Therefore, $x$ must be equal to $0$ as well.

Knowing that $a_3 = x = 0$ we can show that all other weights must be positive.

\begin{claim}\label{cla:3col:allnonzero}
Values $b_{12}$, $b_{13}$, $b_{23}$, $a_1$ and $a_2$ are positive.
\end{claim}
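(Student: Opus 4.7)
The plan is to first deduce positivity of the three $b_{ij}$'s directly from Claim~\ref{cla:gen:numbij}, and then to rule out $a_1 = 0$ (with $a_2 = 0$ handled symmetrically) by deriving two incompatible bounds on $b_{13}$ from the density hypotheses alone. Since the preceding paragraphs have fixed $a_3 = x = 0$, we have $a_i x = 0$ trivially for every $i \in [3]$, so Claim~\ref{cla:gen:numbij} applied in turn to $i = 1, 2, 3$ produces two positive entries among $\{b_{ij}\}_{j \neq i}$ in each case; intersecting those three pairs forces $b_{12}, b_{13}, b_{23}$ all to be strictly positive.

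For $a_1 > 0$ I argue by contradiction; the case $a_2 > 0$ then follows by the obvious symmetry of colors $1$ and $2$ once $a_3 = 0$ is fixed. Assuming $a_1 = 0$ together with $a_3 = x = 0$ and the weight normalization $a_2 + b_{12} + b_{13} + b_{23} = 1$, the three densities collapse to
\[ d_1 = b_{12}^2 + b_{13}^2, \qquad d_3 = b_{13}^2 + b_{23}^2, \qquad d_2 = (1 - b_{13})^2 - 2 b_{12} b_{23}, \]
each required to be strictly greater than $\tfrac{1}{4}$. In particular $d_2 > \tfrac{1}{4}$ forces $b_{13} < \tfrac{1}{2}$, while $d_1, d_3 > \tfrac{1}{4}$ yield $b_{12}, b_{23} > \sqrt{\tfrac{1}{4} - b_{13}^2}$.

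Multiplying those two lower bounds gives $b_{12} b_{23} > \tfrac{1}{4} - b_{13}^2$, while rearranging $d_2 > \tfrac{1}{4}$ gives $b_{12} b_{23} < \tfrac{1}{2}\bigl((1 - b_{13})^2 - \tfrac{1}{4}\bigr)$; combining and simplifying these two leads to $3 b_{13}^2 - 2 b_{13} + \tfrac{1}{4} > 0$, which together with $b_{13} < \tfrac{1}{2}$ forces $b_{13} < \tfrac{1}{6}$. Summing instead the two lower bounds yields $b_{12} + b_{23} > 2\sqrt{\tfrac{1}{4} - b_{13}^2}$; combined with $b_{12} + b_{23} \leq 1 - b_{13}$ (from $a_2 \geq 0$), squaring and rearranging give $b_{13}(5 b_{13} - 2) > 0$, i.e.\ $b_{13} > \tfrac{2}{5}$, contradicting $b_{13} < \tfrac{1}{6}$. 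The only delicate point is spotting the two incompatible bounds on $b_{13}$; once the density expressions collapse as above, the rest is elementary algebra with no need to invoke the maximality-by-operation framework used elsewhere in the section.
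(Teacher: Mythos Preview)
Your proof is correct and follows essentially the same route as the paper: derive two incompatible bounds (one around $\tfrac{2}{5}$, one around $\tfrac{1}{6}$) on a single $b_{ij}$ from the three density inequalities. Your version is a bit more streamlined, since invoking Claim~\ref{cla:gen:numbij} (with $x=0$) immediately gives all three $b_{ij}>0$, whereas the paper argues each of $b_{12},b_{13},b_{23}>0$ by hand; and by using the product and sum of the two lower bounds $b_{12},b_{23}>\sqrt{\tfrac14-b_{13}^2}$ you avoid the paper's extra WLOG assumption $b_{12}\le b_{13}$.
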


\begin{proof}
If $b_{12} = 0$, then either $a_1+b_{13} \leq \frac{1}{2}$ or $a_2+b_{23} \leq \frac{1}{2}$. This gives that the density of edges in color $1$ or~$2$ is at most $\frac{1}{4}$, which is a contradiction. 

If $b_{13} = 0$, then either $a_1+b_{12} \leq \frac{1}{2}$ or $b_{23} \leq \frac{1}{2}$. This implies that the density of edges in color $1$ or~$3$ is at most $\frac{1}{4}$, which is a contradiction. The case $b_{23} = 0$ is analogous.

From symmetry, it remains to consider the case $a_2 = 0$. Without loss of generality we can assume that $b_{12} \leq b_{13}$. Since $\frac{1}{4} < d_2~=~b_{12}^2 + b_{23}^2$ and $b_{23}+2b_{12} \leq 1$, we obtain that $4b_{23}^2 + (1-b_{23})^2 > 1$, which implies that $b_{23} > \frac{2}{5}$.
Using the density of edges in color $1$ we have 
\[\frac{1}{4} < (a_1+b_{12}+b_{13})^2-2b_{12}b_{13} \leq (1-b_{23})^2 - 2b_{12}^2.\] 
Together with the previous bound $b_{12}^2 + b_{23}^2 > \frac{1}{4}$ coming from the density of edges in color $2$, we obtain $(1-b_{23})^2 + 2b_{23}^2 > \frac{3}{4}$, which implies $b_{23} < \frac{1}{6}$ or $b_{23} > \frac{1}{2}$. In the first case we have a contradiction with the previously proven bound $b_{23} > \frac{2}{5}$, while in the second case we have $a_1+b_{12}+b_{13} < \frac{1}{2}$, which means that the density of edges in color $1$ is smaller than $\frac{1}{4}$.
\end{proof}

If the density of edges in color $1$ (or $2$) is strictly larger than the density of edges in color $3$, then we can remove some weight from the vertex of weight $a_1$ (it is positive from Claim~\ref{cla:3col:allnonzero}) and add it to the vertex of weight $b_{23}$ (or from $a_2$ to $b_{13}$). In this way we obtain a clustered graph for $3$ colors with a larger density of edges in the least color, which contradicts the choice of $G$. Thus, we may assume that the density of edges in color $3$ is at least as big as the density of edges in color $1$ and $2$. 
This means that $b_{13}^2 + b_{23}^2 \geq b_{13}^2 + (a_1+b_{12})^2 + 2a_1b_{13}$, which implies $b_{23} > a_1 + b_{12}$ using Claim~\ref{cla:3col:allnonzero}. Therefore, $c_2 = a_2 + b_{12} + b_{23} > a_1 + a_2 + 2b_{12}$.
Similarly, $c_1 > a_1 + a_2 + 2b_{12}$. This contradicts Claim~\ref{cla:gen:aiaj2bi_lower_bound} and ends the proof for $k = 3$.

The proof of Theorem \ref{thm:main_weight} for $k = 4$ is a simple corollary of the theorem for $k = 3$ since $f(4) = f(3)$. 
Let us remove the vertex of weight $a_4$, edges in color $4$, and scale the weights of the remaining vertices so that the weights sum up to $1$ and the proportions between them are kept. Then we obtain a clustered graph for 3 colors with at least the same density of edges in each color, so a counterexample for 4 colors implies a counterexample for 3 colors, which does not exist. 

After finishing the proof we learned that Frankl, Gy\H ori, He, Lv, Salia, Tompkins, Varga and Zhu~\cite{FGHLSTVZ22} solved the problem of maximum possible product of the numbers of edges without a  rainbow path with 3 edges in the case of 3 and 4 colors. The results for 3 colors are independent (none of them implies the other), but since for 4 colors the optimal construction has the same number of edges in each color, their result implies our result for 4 colors. 

\section{Five and six colors}\label{sec:56}

In the previous section we proved Theorem~\ref{thm:main_weight} for at most $4$ colors, so now let us assume that $k = 5$. In this case $f(5) = \frac{1}{9}$ and for every $i \in [5]$ we have $d_i > \frac{1}{9}$ and 
\begin{equation}\label{eq:5col_ci}
    c_i > \sqrt{\frac{1}{9} + 2b_ix + x^2} \geq \sqrt{\frac{1}{9} + x^2}.
\end{equation}
from Claim~\ref{cla:gen:basic_bound}.
We start with two claims lower bounding $b_i$ and $c_i$.

\begin{claim}\label{cla:5col_bij}
If $b_{ij} > 0$ for some $i, j \in [k]$, $i \neq j$, then 
\[b_{ij} > \frac{1}{3}\left(\sqrt{1 - 6 x + 18 x^2} - 1 + 3 x\right) \quad \text{and} \quad c_i > \frac{1}{3}\sqrt{1-6x+27x^2 + 6x\sqrt{1 - 6 x + 18 x^2}}.\]
\end{claim}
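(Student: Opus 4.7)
The plan is to combine three of the general claims already established with a short quadratic calculation. First, since $b_{ij}>0$, Claim~\ref{cla:gen:aiaj2bi_lower_bound} gives
\[ a_i + a_j + 2 b_{ij} \;\geq\; \min\{c_i, c_j\}. \]
Next, because $b_i \geq b_{ij}$ and $b_j \geq b_{ij}$, Claim~\ref{cla:gen:basic_bound} with $f(5)=\tfrac{1}{9}$ applied to both $c_i$ and $c_j$ yields
\[ \min\{c_i, c_j\} \;>\; \sqrt{\tfrac{1}{9} + 2 b_{ij} x + x^2}. \]
Finally, Claim~\ref{cla:gen:aiajbij} applied to the same pair together with $\sqrt{f(5)/f(3)} = \sqrt{(1/9)/(1/4)} = \tfrac{2}{3}$ gives $a_i + a_j + b_{ij} < \tfrac{1}{3}$, hence $a_i + a_j + 2b_{ij} < \tfrac{1}{3} + b_{ij}$.

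Chaining these three inequalities produces
\[ \tfrac{1}{3} + b_{ij} \;>\; \sqrt{\tfrac{1}{9} + 2 b_{ij} x + x^2}. \]
Both sides are positive, so squaring and cancelling the $\tfrac{1}{9}$ reduces this to the quadratic inequality $b_{ij}^2 + 2b_{ij}(\tfrac{1}{3}-x) - x^2 > 0$, or equivalently $3b_{ij}^2 + 2b_{ij}(1-3x) - 3x^2 > 0$. Taking the positive root of the quadratic in $b_{ij}$ gives exactly
\[ b_{ij} \;>\; \tfrac{1}{3}\bigl(\sqrt{1-6x+18x^2} - 1 + 3x\bigr), \]
which is the first claimed bound.

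For the second bound, I would substitute the just-derived lower estimate for $b_{ij}$ back into the basic-bound inequality $c_i^2 > \tfrac{1}{9} + 2 b_{ij} x + x^2$. A direct expansion yields
\[ c_i^2 \;>\; \tfrac{1}{9}\bigl(1 - 6x + 27x^2 + 6x\sqrt{1-6x+18x^2}\bigr), \]
which is the stated inequality on $c_i$ after taking square roots.

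No perturbation argument is needed here beyond what Claims~\ref{cla:gen:basic_bound}, \ref{cla:gen:aiajbij}, and~\ref{cla:gen:aiaj2bi_lower_bound} already provide, so the only real work is the algebraic bookkeeping above. I don't anticipate any obstacle: the only mildly subtle point is making sure that the basic bound is applied to the correct one of $c_i, c_j$, which is handled by the observation that $b_i, b_j \geq b_{ij}$ on both sides simultaneously.
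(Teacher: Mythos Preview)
Your proof is correct and follows essentially the same approach as the paper. The only difference is cosmetic: the paper repeats inline the perturbation of moving weight from $b_{ij}$ to $a_i$ and $a_j$ (the same one underlying Claim~\ref{cla:gen:aiaj2bi_lower_bound}), whereas you simply cite Claim~\ref{cla:gen:aiaj2bi_lower_bound} and chain it with Claims~\ref{cla:gen:basic_bound} and~\ref{cla:gen:aiajbij} to reach the identical inequality $\tfrac{1}{3}+b_{ij}>\sqrt{\tfrac{1}{9}+2b_{ij}x+x^2}$; the subsequent algebra is the same.
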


\begin{proof}
Consider removing weight $a_i + a_j + 2b_{ij}$ from the vertex of weight $b_{ij}$ and adding weight $a_i + b_{ij}$ to the vertex of weight $a_i$ and weight $a_j + b_{ij}$ to the vertex of weight $a_j$. 
The increment in color $i$ is equal to 
\[-(a_i+b_{ij})(a_i + a_j + 2b_{ij}) + (a_i + b_i + x)(a_i+b_{ij}) = (a_i+b_{ij})(c_i - b_{ij} - (a_i+a_j+b_{ij})).\]
Using \eqref{eq:5col_ci} and $a_i + a_j + b_{ij} < \frac{1}{3}$ from Claim~\ref{cla:gen:aiajbij} we have that the increment in color $i$ is bigger than
\[(\min\{a_i, a_j\} +b_{ij})\left(\sqrt{\frac{1}{9} + 2b_{ij}x + x^2} - b_{ij} - \frac{1}{3}\right)\]
and the same value is bounding the increment in color $j$.

If $\sqrt{\frac{1}{9} + 2b_{ij}x + x^2} - b_{ij} - \frac{1}{3}$ is non-negative, then the considered operation is enlarging the density of edges in color $i$ and in $j$, while not changing the densities of edges in the remaining colors. Hence, we have
\[\sqrt{\frac{1}{9} + 2b_{ij}x + x^2} - b_{ij} - \frac{1}{3} < 0.\] 

Solving this inequality we obtain the wanted lower bound for $b_{ij}$ and as a consequence of \eqref{eq:5col_ci} also the wanted lower bound for $c_i$.
\end{proof}

\begin{claim}\label{cla:5col_bi_when_ai_empty}
If $a_i = 0$ for some $i\in[5]$, then
\[b_i > \sqrt{\frac{1}{9} + 2b\left(\frac{1}{3}-b\right)} \quad \text{and} \quad b_i > \frac{1}{3}\sqrt{-5+30x-54x^2 + (6 - 12x)\sqrt{1-6x+18x^2}}.\]
\end{claim}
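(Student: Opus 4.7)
Since $a_i=0$, the density simplifies to $d_i=\sum_{j\neq i} b_{ij}^2$, so expanding $b_i^2=\bigl(\sum_{j\neq i} b_{ij}\bigr)^2$ and using $d_i>1/9$ gives
\[b_i^2 \;>\; \frac{1}{9} \;+\; 2\!\sum_{\substack{j<\ell\\ j,\ell\neq i}}\! b_{ij}b_{i\ell}.\]
Both claimed bounds will follow from suitable lower estimates of this cross-product sum — one in terms of $b$, the other in terms of the sharper threshold $b^* := \tfrac{1}{3}\bigl(\sqrt{1-6x+18x^2}-1+3x\bigr)$ supplied by Claim~\ref{cla:5col_bij}.

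By Claim~\ref{cla:gen:numbij}, since $a_ix=0$, at least two of the $b_{ij}$'s are positive; let $b_{ij_m}$ denote the smallest of them. Retaining only pairs that contain $j_m$ in the cross-product sum (and dropping the remaining non-negative terms) gives $\sum b_{ij}b_{i\ell}\ge b_{ij_m}(b_i-b_{ij_m})$. Being the smallest of at least two positive summands of $b_i$, the value $b_{ij_m}$ lies in $[b,\,b_i/2]$, and since $y\mapsto y(b_i-y)$ is increasing on $[0,b_i/2]$, we conclude $b_{ij_m}(b_i-b_{ij_m})\ge b(b_i-b)$, whence
\[(b_i-b)^2 \;>\; \frac{1}{9}-b^2.\]

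Claim~\ref{cla:gen:aijbij} forces every positive $b_{rs}<1/3$, in particular $b<1/3$, so $1/9-b^2>0$; combined with $b_i-b>0$ this yields $b_i > b + \sqrt{1/9-b^2}$. A short squaring check using $b<1/3$ shows $b + \sqrt{1/9-b^2} \ge \sqrt{1/9 + 2b(1/3-b)}$, establishing the first inequality. For the second, I repeat the argument with $b$ replaced by $b^*$: Claim~\ref{cla:5col_bij} ensures $b_{ij_m}>b^*$, and the side conditions $b^*\le b_i/2$ (as $b_i$ has at least two summands each exceeding $b^*$) and $b^*<1/3$ (since $x<1/3$ by Claim~\ref{cla:gen_bound_for_x}) still hold. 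The same chain then produces $b_i > \sqrt{1/9 + 2b^*(1/3-b^*)}$, and a direct expansion of the right-hand side yields exactly the stated $\tfrac{1}{3}\sqrt{-5+30x-54x^2+(6-12x)\sqrt{1-6x+18x^2}}$.

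The only real obstacle I foresee is conceptual rather than computational: the analogous Claim~\ref{cla:gen:aibi_when_x_zero} only supplies $\sum b_{ij}b_{i\ell}\ge b^2$, which is too weak when $b<1/6$. One has to recognize that pairing the smallest positive $b_{ij}$ with all the others — together with monotonicity of $y(b_i-y)$ on $[0,b_i/2]$ — upgrades the bound to $b(b_i-b)$. After that insight, the remaining work (the squaring verification and the algebraic simplification of the second expression) is routine.
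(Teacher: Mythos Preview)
Your proof is correct and follows essentially the same route as the paper: bound the cross-product sum from below by $b_{ij_m}(b_i-b_{ij_m})$ for the smallest positive $b_{ij_m}$, then use monotonicity of $y\mapsto y(b_i-y)$ on $[0,b_i/2]$ to replace $b_{ij_m}$ by $b$ (respectively $b^*$). The only difference is cosmetic---the paper finishes by invoking $b_i>\tfrac13$ (from Claim~\ref{cla:gen:aibi_when_x_zero}) to pass directly from $b(b_i-b)$ to $b(\tfrac13-b)$ inside the radical, whereas you solve the quadratic $(b_i-b)^2>\tfrac19-b^2$ and then square-compare; note also that your reference \texttt{cla:gen:aijbij} should be \texttt{cla:gen:aiajbij}.
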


\begin{proof}
Let us define $b_{ij_0}$ as $\min \{b_{ij}: j \in [5] \setminus \{i\}, b_{ij} > 0\}$. From Claim \ref{cla:gen:aibi_when_x_zero} we obtain
\[b_i > \sqrt{\frac{1}{9} + 2\!\!\sum_{j, \ell \in [5] \setminus \{i\}, j \neq \ell}\!\! b_{ij} b_{i\ell}} \geq \sqrt{\frac{1}{9} + 2\sum_{\ell \in [5] \setminus \{i, j_0\}} b_{ij_0}b_{i\ell}} = \sqrt{\frac{1}{9} + 2 b_{ij_0}(b_i - b_{ij_0})}.\]
From Claim~\ref{cla:gen:numbij} we have $b_{ij_0} \leq \frac{1}{2}b_i$, which means that the function $(0; \frac{1}{2}b_i] \ni b_{ij_0} \longmapsto 2b_{ij_0}(b_i - b_{ij_0}) \in \mathbb{R}$ is increasing. Using $b_{ij_0} \geq b$ and $b_i > \frac{1}{3}$, we obtain the first bound, while using Claim~\ref{cla:5col_bij} and $b_i > \frac{1}{3}$, we obtain the second bound.
\end{proof}

Now we can show that $x$ must be positive. 

\begin{claim}\label{cla:5col:xpositive}
$x>0$.
\end{claim}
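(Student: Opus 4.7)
Assume for contradiction that $x=0$. The plan is to extract incompatible bounds on $b:=\min\{b_{ij}: b_{ij}>0\}$. First, since $x=0$ makes $a_ix=0$ for every $i$, Claim~\ref{cla:gen:aibi_when_x_zero} applies to every color and yields $c_i>\sqrt{1/9+2b^2}$, because any vertex of positive weight $b_{ij}$ contributes to the sum $\sum_{j<\ell}b_{ij}b_{i\ell}$ of Claim~\ref{cla:gen:aibi_when_x_zero} through two neighbors (the support graph has minimum degree at least $2$, as we verify below). Summing over $i\in[5]$ and using the identity $\sum c_i=\sum a_i+2\sum b_{ij}=2-\sum a_i$ (which follows from $x=0$ and $\sum a_i+\sum b_{ij}=1$) gives $\sum a_i<2-5\sqrt{1/9+2b^2}$. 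On the other hand, Claim~\ref{cla:gen:sum_of_ai} in the $x=0$ case provides $\sum a_i\geq c-2b>\sqrt{1/9+2b^2}-2b$. Chaining the two inequalities yields $6\sqrt{1/9+2b^2}<2+2b$, and squaring produces $17b^2<2b$, i.e.\ $b<2/17$.

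Second, I want a matching lower bound on $b$. Let $\{p,q\}$ be an edge realising $b_{pq}=b$. By Claim~\ref{cla:gen:aiaj2bi_lower_bound}, $a_p+a_q+2b\geq \min(c_p,c_q)$. In the favourable subcase $a_p=a_q=0$, Claim~\ref{cla:5col_bi_when_ai_empty} applied at $p$ (and analogously at $q$) gives $\min(c_p,c_q)=\min(b_p,b_q)>\sqrt{1/9+2b(1/3-b)}$, so $2b>\sqrt{1/9+2b(1/3-b)}$. Squaring yields $54b^2-6b-1>0$, hence $b>(1+\sqrt7)/18\approx 0.2025$. But the support graph $H$ of the non-zero $b_{ij}$'s must have minimum degree at least $2$: if some vertex $i$ had at most one non-zero $b_{ij}$, then $d_i=(a_i+b_{ij})^2$, and by Claim~\ref{cla:gen:aiajbij} we would get $a_i+b_{ij}<1/3$ forcing $d_i<1/9$, contradicting $d_i>1/9$. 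Consequently $H$ has at least $5$ edges, and since $\sum b_{ij}\leq 1$ we get $b\leq 1/5<(1+\sqrt7)/18$, a contradiction.

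The main obstacle in the plan is to reduce to the favourable subcase $a_p=a_q=0$ of the previous paragraph. My intended route is a perturbation argument: if $a_p>0$, the operation of removing a tiny weight from $a_p$ and redistributing it into $b_{pq}$ or into $x$ produces computable increments in each $d_i$ (as in the proofs of Claim~\ref{cla:gen:aiaj2bi_lower_bound} and Claim~\ref{cla:5col_bij}); combining the maximality of $G$ with the narrow range for $\sum a_i$ established in the first paragraph and the strong upper bound $b<2/17$ should force either $a_p=0$ at the minimum edge, or a relocation of the minimum to an edge whose endpoints both have zero $a$-weight. In either event the second paragraph supplies the desired contradiction, completing the proof that $x>0$.
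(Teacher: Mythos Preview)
Your first paragraph is correct: with $x=0$, Claim~\ref{cla:gen:aibi_when_x_zero} applies to every colour, the identity $\sum_i c_i=2-\sum_i a_i$ holds, and together with Claim~\ref{cla:gen:sum_of_ai} you obtain $6\sqrt{\tfrac19+2b^2}<2+2b$, hence $b<\tfrac{2}{17}$. Your second paragraph is also valid \emph{in the subcase} $a_p=a_q=0$. The genuine gap is the reduction to that subcase, and the perturbation you sketch does not close it. Shifting weight from $a_p$ into $b_{pq}$ gives increment $(a_p+b_{pq})-c_p=b_{pq}-b_p<0$ in colour $p$ (strict, since $p$ has at least two nonzero $b_{pj}$'s); shifting from $a_p$ into $x$ gives increment $a_p-c_p=-b_p<0$ in colour $p$. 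Either way colour $p$ strictly drops, so maximality of $G$ is entirely consistent with $a_p>0$ and provides no constraint. There is no evident perturbation that forces the endpoints of the minimal edge to have zero $a$-weight, and the narrow window you obtain for $\sum a_i$ does not by itself control $a_p$ or $a_q$ individually.

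The paper avoids this obstacle by splitting on the \emph{global} number of vanishing $a_i$'s rather than on the endpoints of the minimal edge. If at most one $a_i$ is zero, one removes a unit weight from each of the (at least four) positive $a_i$'s and redistributes proportionally to all weights; maximality then forces $c_\ell\ge 4d_\ell>\tfrac49$ for some $\ell$, and summing this with four copies of $c_i>\sqrt{\tfrac19+2b^2}$ and $\sum a_i>\tfrac13-2b$ yields $2>\tfrac49+4\sqrt{\tfrac19+2b^2}+\tfrac13-2b$, which has no solution. If at least two $a_i$ vanish, say $a_1=a_2=0$, then Claim~\ref{cla:5col_bi_when_ai_empty} gives $b_1,b_2>\sqrt{\tfrac19+2b(\tfrac13-b)}$ directly (no need to match these indices with the minimal edge), and summing with the remaining three $c_i$'s and $\sum a_i$ forces $b>0.39$, contradicting $b\le\tfrac13$ (there are at least three nonzero $b_{ij}$'s by Claim~\ref{cla:gen:numbij}). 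Your argument can be repaired by adopting this case split.
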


\begin{proof}
Assume that $x=0$. From Claim \ref{cla:gen:numbij} for every $i \in [5]$ the set $ \{j \in [5] \setminus\{i\}: b_{ij} > 0\}$ has at least two elements. 

If at most one of the weights $a_i$ is zero (without loss of generality $a_i > 0$ for $i \in [4]$), we remove a unit weight from each vertex of weight $a_i$ for $i \in [4]$ and add weights to each vertex proportionally to its weight (i.e., for each vertex of weight $a_i$ or $b_{ij}$ we add $4a_i$ or $4b_{ij}$ respectively). The increment in color $i \in [4]$ is at least $4d_i - c_i$, while the density of edges in color $5$ increases. From the maximality of~$G$ there exists $\ell \in [4]$ such that $4d_\ell - c_\ell \leq 0$. Without loss of generality let $\ell = 1$, and so $c_1 \geq 4d_1 > \frac{4}{9}$. Additionally we know that $c_i > \sqrt{\frac{1}{9} + 2b^2}$ for $i \in [5]\setminus\{1\}$ and $\sum_{i \in [5]} a_i > \frac{1}{3} - 2b$ from Claims \ref{cla:gen:aibi_when_x_zero} and~\ref{cla:gen:sum_of_ai}. By summing up all these inequalities, we obtain
\[2 > \frac{4}{9} + 4\sqrt{\frac{1}{9} + 2b^2} + \frac{1}{3} - 2b,\]
which is a contradiction.

Now we know that the set $\{i \in [5]: a_i = 0\}$ has at least two elements. Without loss of generality $a_1 = a_2 = 0$. From Claim \ref{cla:5col_bi_when_ai_empty}, $b_i > \sqrt{\frac{1}{9} + 2b(\frac{1}{3}-b)}$ for $i \in \{1,2\}$. Using $c_i > \sqrt{\frac{1}{9} + 2b^2}$ for $i \in \{3, 4, 5\}$ from Claim \ref{cla:gen:aibi_when_x_zero}, $\sum_{i \in [5]} a_i > \frac{1}{3} - 2b$ from Claim~\ref{cla:gen:sum_of_ai} and summing up all the inequalities, we obtain
\[2 > 2\sqrt{\frac{1}{9} + 2b\left(\frac{1}{3}-b\right)} + 3\sqrt{\frac{1}{9} + 2b^2} + \frac{1}{3} - 2b,\]
which implies that $b > 0.39$. On the other hand, since $a_1=a_2=0$ Claim~\ref{cla:gen:numbij} implies that there are at least three non-zero values among $b_{ij}$, so $b \leq 1/3$. This gives a contradiction.  
\end{proof}

The above claim allows to use the better bound in Claim~\ref{cla:gen:sum_of_ai}. Now we show that not all $a_i$ for $i \in [5]$ can be positive. For better readability we split the proof into two claims.

\begin{claim}\label{cla:5col_onlyonebij}
If $a_i > 0$ for all $i\in [5]$, then the set $\{b_{ij}: i, j \in [5], i \neq j, b_{ij} > 0\}$ has exactly one element.
\end{claim}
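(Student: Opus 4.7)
I argue by contradiction, assuming at least two of the weights $b_{ij}$ are positive. Since every $a_i > 0$, I can apply the weight-redistribution operation from the proofs of Claim~\ref{cla:3col:exists_zero_ai} and Claim~\ref{cla:5col:xpositive}, now extended to all five colors: remove unit weight from each $a_i$, $i \in [5]$, and add the removed mass back proportionally to every vertex (weight $5 a_i$ to $a_i$, weight $5 b_{ij}$ to $b_{ij}$, and weight $5 x$ to $x$). A computation identical in structure to the one in Claim~\ref{cla:3col:exists_zero_ai} yields increment $5 d_i - c_i$ in color $i$, and by the maximality of $G$ some $i^* \in [5]$ satisfies $c_{i^*} \geq 5 d_{i^*} > 5/9$.

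The rest of the argument combines lower bounds on the $c_i$'s and on $A = \sum_i a_i$ with the exact identity $\sum_{i \in [5]} c_i + A = 2 + 3 x$, obtained from $\sum c_i = 1 + B + 4 x$ (a direct calculation, with $B = \sum_{i < j} b_{ij}$) together with $A + B + x = 1$. The relevant lower bounds are: $c_i > \gamma(x) := \frac{1}{3}\sqrt{1 - 6 x + 27 x^2 + 6 x \sqrt{1 - 6 x + 18 x^2}}$ for every color $i$ with $b_i > 0$ (Claim~\ref{cla:5col_bij}); $c_i > \sqrt{1/9 + x^2}$ unconditionally and $A > \sqrt{1/9 + x^2}$ (Claim~\ref{cla:gen:basic_bound} and Claim~\ref{cla:gen:sum_of_ai}); and the pairwise refinement $\max\{c_i, c_j\} > \sqrt{1/9 - x/3} + x$ for every pair $\{i,j\}$ (Claim~\ref{cla:gen:two_ci's}). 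A short case analysis on whether the two nonzero $b_{ij}$'s share an index (so that three colors satisfy $c_i > \gamma(x)$ and the shared color even satisfies $c_i > \sqrt{1/9 + 4 \beta(x) x + x^2}$ via $b_i \geq 2 \beta(x)$ in Claim~\ref{cla:gen:basic_bound}) or are disjoint (so that four colors satisfy $c_i > \gamma(x)$) shows that in either situation the sum of the strongest applicable lower bounds, supplemented by the pairwise refinement applied to the pair with the weakest individual bounds, strictly exceeds $2 + 3 x$, contradicting the identity above.

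The main obstacle is that the resulting one-variable inequality in $x \in (0, 1/3)$, where the upper bound on $x$ comes from Claim~\ref{cla:gen_bound_for_x}, is numerically very tight: at intermediate values of $x$ the margin is thin, so the proof genuinely needs the pairwise improvement of Claim~\ref{cla:gen:two_ci's} and the strengthened version of Claim~\ref{cla:gen:basic_bound} whenever some color carries two positive $b_{ij}$'s — using only the weaker bounds, the inequality would fail for $x$ near the middle of the interval. The verification of the final inequality then reduces to an elementary, if somewhat involved, one-variable calculation.
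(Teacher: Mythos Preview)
Your opening move --- remove unit weight from each $a_i$ and redistribute proportionally to obtain some $c_{i^*} > 5/9$ --- is exactly the paper's first step. The divergence comes immediately after: instead of your case analysis on the configuration of the nonzero $b_{ij}$'s together with Claim~\ref{cla:gen:two_ci's}, the paper simply applies the \emph{same} redistribution trick a second time, now removing unit weight only from $a_i$ for $i \in [4]$ (having set $i^* = 5$ without loss of generality). This produces a second color $\ell \in [4]$ with $c_\ell > 4/9$, say $\ell = 4$. Since two nonzero $b_{ij}$'s touch at least three colors, at least one of $c_1, c_2, c_3$ exceeds $\gamma(x)$; bounding the remaining two $c_i$'s and $A$ by $\sqrt{1/9 + x^2}$ yields the single inequality
\[
2 + 3x > \tfrac{5}{9} + \tfrac{4}{9} + \gamma(x) + 3\sqrt{\tfrac{1}{9} + x^2},
\]
which has no solutions --- no case split, no pairwise refinement, and a comfortably larger margin than your route.

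Your alternative may well go through, but as written it is not a proof: you never state the final inequalities explicitly or verify them, and you do not track how $i^*$ interacts with the colors carrying positive $b_i$. In your Case~A, for instance, if $i^*$ coincides with the shared color then the strengthened bound $\sqrt{1/9 + 4\beta(x)x + x^2}$ is subsumed by $5/9$, and the pairwise refinement on $\{4,5\}$ only upgrades one of the two basic bounds to $\sqrt{1/9 - x/3} + x$ --- the resulting inequality has a margin of roughly $0.01$ near $x \approx 0.15$--$0.17$, so ``elementary, if somewhat involved'' is doing a lot of work. The paper's second-redistribution idea removes both the case analysis and the tightness, and is the cleaner route.
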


\begin{proof}
Let us assume the contrary. Since $a_i > 0$ for every $i \in [5]$, we can remove a unit weight from each vertex of weight $a_i$ and add the removed weights to each vertex of weight $a_i, b_{ij}, i, j \in [5], i \neq j$ and $x$ proportionally to its weight (i.e., the weights $5a_i$, $5b_{ij}$ and $5x$ are added respectively). For every~$i \in [5]$ the increment in color $i$ is $5d_i - c_i$ and from the maximality of $G$, there must be a color $\ell \in [5]$ such that $5d_\ell - c_\ell \leq 0$. Without loss of generality let $\ell = 5$, and so $c_5 > \frac{5}{9}$. 

Similarly, by removing a unit weight from each vertex of weight $a_i$ for $i \in [4]$, and adding the removed weights to each vertex proportionally to its weight, we must have a color $\ell \in [4]$ such that the increment in color $\ell$ is non-positive, without loss of generality $\ell = 4$. It implies $c_4 > \frac{4}{9}$.

As we assumed that the set $\{b_{ij}: i, j \in [5], i \neq j, b_{ij}\}$ has at least two elements, there are at least three values of $i\in [5]$ for which $b_i>0$. Thus we can apply Claim~\ref{cla:5col_bij} to obtain a lower bound for~$c_j$ for some $j \in [3]$. For the remaining yet unbounded values of $c_i$ we can apply Claim \ref{cla:gen:basic_bound}. Finally, applying Claims \ref{cla:gen:sum_of_ai} and \ref{cla:gen:basic_bound} we get that $\sum_{i \in [5]} a_i > \sqrt{\frac{1}{9}+x^2}$. By summing up all of the above bounds and using the fact in their sum term $x$ is counted $5$ times and any other term out of $a_i$, $b_{ij}$, $i, j \in [5], i \neq j$ is counted twice, we obtain
\[2 + 3x > \frac{5}{9} + \frac{4}{9} + \frac{1}{3}\sqrt{1-6x+27x^2 + 6x\sqrt{1 - 6 x + 18 x^2}} + 3\sqrt{\frac{1}{9}+x^2}.\]
This inequality has no solutions, which ends the proof.
\end{proof}

\begin{claim}
There exists $i \in [5]$ such that $a_i = 0$.
\end{claim}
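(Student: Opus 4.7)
My plan is to argue by contradiction. Suppose $a_i > 0$ for every $i \in [5]$. By Claim~\ref{cla:5col_onlyonebij} I may additionally assume that exactly one weight $b_{ij}$ is positive; without loss of generality $b_{45} > 0$ and $b_{ij} = 0$ for all other pairs. With this structure, $b_i = 0$ for $i \in \{1,2,3\}$ while $b_4 = b_5 = b_{45}$, so the density formulas simplify to $d_i = a_i^2 + 2a_ix$ for $i \in \{1,2,3\}$ and $d_\ell = (a_\ell + b_{45})^2 + 2a_\ell x$ for $\ell \in \{4,5\}$.

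The first step is to convert the inequalities $d_i > 1/9$ for $i \in \{1,2,3\}$ into the pointwise lower bounds $a_i > \sqrt{1/9 + x^2} - x$, sum the three of them, and use $\sum_{j=1}^{5} a_j + b_{45} + x = 1$ to obtain the upper bound
\[
a_4 + a_5 + b_{45} < R(x) := 1 + 2x - 3\sqrt{\tfrac{1}{9} + x^2}.
\]
A single-variable optimization shows $R$ attains its maximum $1 - \sqrt{5}/3 < 1/3$ at $x = 2/(3\sqrt{5})$, so $R(x) < 1/3$ throughout the admissible range $x \in (0, 1/3)$ (the upper endpoint for $x$ coming from Claim~\ref{cla:gen_bound_for_x} and the positivity of $x$ from Claim~\ref{cla:5col:xpositive}).

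Next I translate $d_4, d_5 > 1/9$ into the individual lower bounds $a_\ell > \sqrt{1/9 + 2b_{45}x + x^2} - (b_{45}+x)$ for $\ell \in \{4,5\}$. Summing these and combining with the upper bound from the first step gives, after rearrangement,
\[
R(x) + b_{45} + 2x > 2\sqrt{\tfrac{1}{9} + 2b_{45}x + x^2}.
\]
Squaring both sides (both are positive) and simplifying produces the quadratic inequality
\[
b_{45}^2 + 2b_{45}(R(x) - 2x) + R(x)^2 + 4R(x)x - \tfrac{4}{9} > 0.
\]

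The main obstacle, and the key step, is showing this strict quadratic inequality is incompatible with the feasibility bounds $0 < b_{45} < R(x)$. I plan to compute the two real roots $S_\pm$ of the associated quadratic in $b_{45}$ and verify two elementary estimates for $x \in (0, 1/3)$: first, that the larger root satisfies $S_+ \geq R(x)$, which after manipulation reduces to the already-established inequality $R(x) \leq 1/3$; second, that the smaller root satisfies $S_- \leq 0$, which reduces to $R(x)(R(x) + 4x) \leq 4/9$ and can be checked directly using $R(x) \leq 1 - \sqrt{5}/3$ together with $x < 1/3$. These two estimates sandwich every feasible $b_{45}$ into the interval $[S_-, S_+]$, where the quadratic is non-positive, contradicting the required strict inequality and completing the proof.
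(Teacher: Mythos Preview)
Your argument is correct and takes a genuinely different route from the paper's. After the common reduction via Claim~\ref{cla:5col_onlyonebij} to the situation where only $b_{45}$ is positive, the paper proceeds by first symmetrizing (shifting weight so that $c_1=c_2=c_3$ and $c_4=c_5$), then applying the weight-removal operation to force some $c_i>5/9$, and finally combining Claim~\ref{cla:5col_bij} with Claims~\ref{cla:gen:basic_bound} and~\ref{cla:gen:sum_of_ai} to produce two incompatible numerical ranges for $x$ (namely $x>0.31$ and $x<0.27$). Your approach instead bypasses all of the weight-shifting machinery and Claim~\ref{cla:5col_bij}: you extract the bounds $a_i>\sqrt{1/9+x^2}-x$ and $a_\ell>\sqrt{1/9+2b_{45}x+x^2}-(b_{45}+x)$ directly from the density inequalities, feed them into the single linear constraint $\sum a_j+b_{45}+x=1$, and reduce everything to a one-variable quadratic infeasibility in $b_{45}$. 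The verifications you outline ($R(x)<1/3$, discriminant positive, $S_+\ge R(x)$ reducing to $R\le 1/3$, $S_-\le 0$ reducing to $R(R+4x)\le 4/9$, and the last bound following from $R\le 1-\sqrt{5}/3$ and $x<1/3$) all check out. What you gain is a self-contained, purely analytic argument using only Claims~\ref{cla:5col_onlyonebij}, \ref{cla:5col:xpositive} and~\ref{cla:gen_bound_for_x}; what the paper's approach gains is consistency with the weight-removal framework used throughout, at the cost of invoking more of the earlier claims.
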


\begin{proof}
Assuming the contrary, i.e., that $a_i > 0$ for every $i \in [5]$ and using Claim~\ref{cla:5col_onlyonebij} we have, without loss of generality, that $b_{45}$ is the only non-zero value among $b_{ij}$, so $b_i = 0$ for $i \in [3]$. 
By shifting weights between $a_4$ and $a_5$, as well as between $a_1$, $a_2$ and $a_3$, we may assume that $c_1=c_2=c_3$ and $c_4=c_5$.

We bound each $c_i$ and $\sum_{i \in [5]} a_i$ similarly as in the proof of Claim~\ref{cla:5col_onlyonebij}. 
Removing a unit weight from each vertex of weight $a_i$ and adding the weights taken to every vertex proportionally to its weight, we obtain that there exists $i \in [5]$ such that $c_i > \frac{5}{9}$. 
Since $c_1=c_2=c_3$ and $c_4=c_5$, we get that the bound $c_i > \frac{5}{9}$ must hold for at least two values of $i$. 
For the remaining three values of $c_i$ and for $\sum_{i \in [5]} a_i$ we use Claims \ref{cla:gen:basic_bound} and \ref{cla:gen:sum_of_ai}. By summing up all the bounds, we obtain
\[2 + 3x > \frac{5}{9} + \frac{5}{9} + 4\sqrt{\frac{1}{9}+x^2},\]
which implies $x > 0.31$.

On the other hand, using Claim~\ref{cla:5col_bij} to bound $c_4$ and $c_5$, Claim \ref{cla:gen:basic_bound} to bound $c_1$, $c_2$, $c_3$ and bounding $\sum_{i \in [5]} a_i$ as previously, we obtain
\[2 + 3x > \frac{2}{3}\sqrt{1-6x+27x^2 + 6x\sqrt{1 - 6 x + 18 x^2}} + 4\sqrt{\frac{1}{9}+x^2},\]
which means $x < 0.27$.

The proven two bounds on $x$ give a contradiction.
\end{proof}

Knowing that there exists $i \in [5]$ such that $a_i = 0$ we can finish the proof of Theorem \ref{thm:main_weight} for $k = 5$. 
Without loss of generality we can assume that $a_5 = 0$ and bound $b_5$ from Claim~\ref{cla:5col_bi_when_ai_empty}. For $i \in [4]$ we bound $c_i$ and $\sum_{j \in [5]} a_j$ using Claims \ref{cla:gen:basic_bound} and \ref{cla:gen:sum_of_ai}. By summing up all the bounds we obtain
\[2 + 2x > \frac{1}{3}\sqrt{- 5 + 30x - 54x^2 + (6 - 12x)\sqrt{1-6x+18x^2}} + 5\sqrt{\frac{1}{9}+x^2},\]
which implies $x < 0.27$ or $x > 0.35$. From Claim \ref{cla:gen_bound_for_x} we have $x < \frac{1}{3}$, so $x < 0.27$.

Now, there are two cases that need to be considered. First let us assume that all $a_i$ for $i \in [4]$ are non-zero. By removing a unit weight from each vertex of weight $a_i$ for $i \in [4]$ and adding the removed weights to each vertex proportionally to its weight, we obtain that there is a color (without loss of generality~$1$) such that $c_1 > \frac{4}{9}$. Now without loss of generality $c_2 \geq c_3$ and from Claim \ref{cla:gen:two_ci's} we obtain that $c_2 > \sqrt{\frac{1}{9} - \frac{1}{3}x} + x$. For $b_5$ we again use Claim \ref{cla:5col_bi_when_ai_empty}. For $c_3, c_4$ and $\sum_{i \in [5]} a_i$ we use inequalities from Claims \ref{cla:gen:basic_bound} and \ref{cla:gen:sum_of_ai}. By summing up all these inequalities we obtain
\[2 + 2x > \frac{4}{9} + \frac{1}{3}\sqrt{-5+30x -54x^2 + (6 - 12x)\sqrt{1-6x+18x^2}}+ \sqrt{\frac{1}{9}-\frac{1}{3}x} + x + 3\sqrt{\frac{1}{9}+x^2}.\]
This implies $x > 0.28$, which is a contradiction to the fact that $x < 0.27$.

The second case is that there are two values $a_i$ (without loss of generality $a_4$ and $a_5$) which are equal to 0. For $b_4$ and $b_5$ we use the bound from Claim~\ref{cla:5col_bi_when_ai_empty}, for two bigger values out of $c_1, c_2, c_3$ (without loss of generality, for $c_2$ and $c_3$) the inequality from Claim \ref{cla:gen:two_ci's} and for $c_1$ and $\sum_{i\in[5]} a_i$ the bound from Claims \ref{cla:gen:basic_bound} and \ref{cla:gen:sum_of_ai}. By summing up all these inequalities we obtain
\[2 + x > \frac{2}{3}\sqrt{-5+30x -54x^2 + (6 - 12x)\sqrt{1-6x+18x^2}}+ 2\sqrt{\frac{1}{9}-\frac{1}{3}x} + 2x + 2\sqrt{\frac{1}{9}+x^2},\]
which implies $x > 0.33$. That gives a contradiction and finishes the proof for five colors.

The proof for $k = 6$ follows from the theorem for $k=5$ since $f(6) = f(5)$, analogically to the case of $k = 4$. 
By removing the vertex of weight $a_6$, edges in color $6$, and by scaling the weights of the other vertices we obtain a clustered graph for $5$ colors with at least the same density of edges in each color. Thus, a hypothetical counterexample for 6 colors implies a counterexample for 5 colors, which does not exist. 

\section{At least seven colors}\label{sec:7+}

We start the proof for $k\geq 7$ with justifying that $x$ must be positive. By contrary suppose that $x=0$. Then $c_i > \sqrt{\frac{1}{2k - 1} + 2b^2}$ for $i \in [k]$ from Claim~\ref{cla:gen:aibi_when_x_zero} and $\sum_{i \in [k]} a_i \geq \sqrt{\frac{1}{2k-1} + 2b^2} - 2b$ from Claim~\ref{cla:gen:sum_of_ai}. Summing up all these inequalities leads to
\[2 > (k+1)\sqrt{\frac{1}{2k-1} + 2b^2} - 2b.\]
The function $[7; +\infty) \ni k \longmapsto (k+1)\sqrt{\frac{1}{2k-1} + 2b^2} - 2b \in \mathbb{R}$ is increasing, so
\[2 > 8\sqrt{\frac{1}{13} + 2b^2} - 2b,\]
which is a contradiction.

In the remaining proof we consider separately the cases $k=7$, $k=8$ and $ k\geq9$.

Firstly we consider the case $k = 7$. Without loss of generality we may assume that $c_7 = \min_{i \in [7]} c_i$. From Claim \ref{cla:gen:two_ci's} we have $c_i = \max\{c_7, c_i\} > \sqrt{\frac{1}{13} - \left(1 - \frac{3}{\sqrt{13}}\right)x} + x$ for $i \in [6]$, while from Claim \ref{cla:gen:basic_bound}, $c_7 > \sqrt{\frac{1}{13} + x^2}$, and from Claim \ref{cla:gen:sum_of_ai}, $\sum_{i \in [7]} a_i > \sqrt{\frac{1}{13} + x^2}$. Summing up these inequalities we obtain
\[ 2 + 5x > 6\sqrt{\frac{1}{13} - \left(1-\frac{3}{\sqrt{13}}\right)x} + 6x + 2\sqrt{\frac{1}{13} + x^2}.\]
This implies $x > 0.38$. On the other hand from Claim \ref{cla:gen_bound_for_x} we have $x < 0.17$, which gives a contradiction.

The proof for $k = 8$ is similar but requires considering three cases depending on the number of non-zero values of $a_i$. In each case we will obtain a contradiction with $x<0.23$ from Claim~\ref{cla:gen_bound_for_x}.

If for each $i \in [8]$, $a_i > 0$, then consider removing a unit weight from each vertex of weight $a_i$ and adding the removed weights to each vertex proportionally to its weight, i.e., for every vertex $v$ of weight $w$ we add to $v$ weight $8w$. For every $i \in [8]$ the increment in color $i$ is equal to $8d_i - c_i$. Maximality of $G$ implies that there must be a color, without loss of generality it is color $8$, that has non-positive increment, which means $c_8 > \frac{8}{15}$. 
Next, consider removing a unit weight from each vertex of weight $a_i$ for $i \in [7]$ and adding the removed weights to each vertex proportionally to its weight. Similarly, it implies that for some color, without loss of generality color $7$, we have $c_7 > \frac{7}{15}$. 
Now, we may assume that $c_6 = \min_{i \in [6]} c_i$ and use the bound $c_i = \max\{c_6, c_i\} > \sqrt{\frac{1}{15} - \left( 1 - \sqrt{\frac{3}{5}} \right)x} + x$ from Claim \ref{cla:gen:two_ci's} for $i \in [5]$. Additionally, from Claims \ref{cla:gen:basic_bound} and \ref{cla:gen:sum_of_ai}, $c_6 > \sqrt{\frac{1}{15} + x^2}$ and $\sum_{i \in [8]} a_i > \sqrt{\frac{1}{15} + x^2}$. By summing up all these inequalities, we obtain
\[2 + 6x > \frac{8}{15} + \frac{7}{15} + 5\sqrt{\frac{1}{15} - \left(1 - \sqrt{\frac{3}{5}}\right)x} + 5x + 2\sqrt{\frac{1}{15} + x^2},\]
which implies $x > 0.24$ and give a contradiction with Claim~\ref{cla:gen_bound_for_x}.

If for exactly one $i \in [8]$, $a_i = 0$, without loss of generality, $a_8 = 0$, then from Claim~\ref{cla:gen:aibi_when_x_zero} we have $c_8 > \sqrt{\frac{1}{15} + 2b^2} + x \geq \sqrt{\frac{1}{15}} + x$, while by removing a unit weight from each vertex of weight $a_i$ for~$i \in [7]$ and adding the removed weights to each vertex proportionally to its weight, similarly as before, we obtain, without loss of generality, that $c_7 > \frac{7}{15}$. 
The remaining values $c_i$ for $i \in [6]$ and $\sum_{i \in [8]} a_i$ we can bound as in the previous paragraph and obtain
\[2 + 6x > \sqrt{\frac{1}{15}} + x + \frac{7}{15} +  5\sqrt{\frac{1}{15} - \left(1 - \sqrt{\frac{3}{5}}\right)x} + 5x + 2\sqrt{\frac{1}{15} + x^2}.\]
It implies $x > 0.23$ and gives a contradiction.
 
In the remaining case there are at least two distinct colors $i, j \in [8]$ such that $a_i = a_j = 0$, without loss of generality $a_7 = a_8 = 0$. It gives $c_i > \sqrt{\frac{1}{15}} + x$ for $i \in \{7, 8\}$ from Claim~\ref{cla:gen:aibi_when_x_zero}. Summing these inequalities with the previous bounds leads to
\[2 + 6x > 2\sqrt{\frac{1}{15}} + 2x + 5\sqrt{\frac{1}{15} - \left(1 - \sqrt{\frac{3}{5}}\right)x} + 5x + 2\sqrt{\frac{1}{15} + x^2},\]
which implies $x > 0.24$ and finishes the proof of $k=8$.
 
Finally, consider the case of $k \geq 9$. Without loss of generality let us assume that $c_k = \min_{i \in [k]} c_i$. From Claim \ref{cla:gen:two_ci's} we obtain $c_i > \sqrt{\frac{1}{2k-1} - \left(1 - \sqrt{\frac{2k-5}{2k-1}}\right)x} + x$ for $i \in [k-1]$. Additionally, from Claim~\ref{cla:gen:basic_bound} and Claim~\ref{cla:gen:sum_of_ai}, the bounds $c_k > \sqrt{\frac{1}{2k-1} + x^2}$ and $\sum_{i \in [k]} a_i > \sqrt{\frac{1}{2k - 1} + x^2}$ hold. Summing up all these inequalities leads to
\[2 > (k-1)\sqrt{\frac{1}{2k-1} - \left(1 - \sqrt{\frac{2k-5}{2k-1}}\right)x} + x + 2\sqrt{\frac{1}{2k - 1} + x^2}.\]
As from Claim \ref{cla:gen_bound_for_x}, $x < 1 - \sqrt{\frac{2k - 5}{2k - 1}}$, the above inequality implies
\[2 > (k-1)\sqrt{\frac{1}{2k-1} - \left(1 - \sqrt{\frac{2k-5}{2k-1}}\right)^2} + 2\sqrt{\frac{1}{2k-1}}.\]
This inequality has no solutions for $k \geq 9$, which finishes the proof of Theorem~\ref{thm:main_weight}.
 
\section{Conclusion}

Since in this paper we determine for any number of colors the minimum number of edges in each color forcing a rainbow path with $3$ edges, it is natural to ask a similar question for longer rainbow paths. Namely for positive integers $k \geq \ell \geq 4$ find the asymptotically optimal value $f(k, \ell)$ such that the following statement holds.
For every $\epsilon > 0$ there exits $n_0 \in \mathbb{N}$ such that for every $n \geq n_0$ and graphs $G_1, G_2, \ldots, G_k$ on a common set of $n$ vertices, each graph having at least $(f(k, \ell) + \epsilon)\frac{n^2}{2}$ edges, there exists a path with $\ell$ edges each coming from a different graph.
Unfortunately the method presented here cannot be easily generalized for longer paths. A not tight bound was recently presented in \cite{IKLS23}.

\end{document}